\documentclass{article}

\usepackage[english]{babel}
\usepackage{amssymb}
\usepackage{amsmath}
\usepackage{amsthm}

\newtheorem{theorem}{Theorem}[section]
\newtheorem{definition}[theorem]{Definition}
\newtheorem{lemma}[theorem]{Lemma}
\newtheorem{proposition}[theorem]{Proposition}
\newtheorem{corollary}[theorem]{Corollary}
\newtheorem{remark}[theorem]{Remark}

\newtheorem{example}[theorem]{Example}

\newcommand{\hh}{{\mathbb{H}}}

\newcommand{\cc}{{\mathbb{C}}}
\newcommand{\rr}{{\mathbb{R}}}
\newcommand{\nn}{{\mathbb{N}}}
\newcommand{\s}{{\mathbb{S}}}
\newcommand{\z}{{\mathcal{Z}}}
\newcommand{\f}{{\mathcal{F}}}

\newcommand{\mr}{{\mathcal{M}}}

\newcommand{\rft}{{\mathfrak{G}}}

\newcommand{\I}{{\mathrm{Id}}}
\newcommand{\B}{{\mathbb{B}}}

\newcommand{\reg}{\frak{Reg}(\B,\B)}

\title{\bf Regular vs. classical\\ M\"obius transformations of the quaternionic unit ball}
\author{Cinzia Bisi\footnote{Partially supported by GNSAGA of the INdAM and by FIRB ``Geometria Differenziale Complessa e Dinamica Olomorfa''.}\\
\normalsize Universit\`a degli Studi di Ferrara\\
\normalsize Dipartimento di Matematica e Informatica\\
\normalsize Via Machiavelli 35, 44121 Ferrara, Italy\\
\normalsize cinzia.bisi@unife.it
\and
Caterina Stoppato$^*$ \footnote{Partially supported by FSE and by Regione Lombardia.}\\ 
\normalsize Universit\`a degli Studi di Milano\\
\normalsize Dipartimento di Matematica ``F. Enriques''\\
\normalsize Via Saldini 50, 20133 Milano, Italy\\  
\normalsize caterina.stoppato@unimi.it\\}

\date{  }


\begin{document}
\maketitle


\begin{abstract}
The regular fractional transformations of the extended quaternionic space have been recently introduced as variants of the classical linear fractional transformations. These variants have the advantage of being included in the class of slice regular functions, introduced by Gentili and Struppa in 2006, so that they can be studied with the useful tools available in this theory. We first consider their general properties, then focus on the regular M\"obius transformations of the quaternionic unit ball $\B$, comparing the latter with their classical analogs. In particular we study the relation between the regular M\"obius transformations and the Poincar\'e metric of $\B$, which is preserved by the classical M\"obius transformations. Furthermore, we announce a result that is a quaternionic analog of the Schwarz-Pick lemma.
\end{abstract}

\vfill
\section{Classical M\"obius transformations and\\ Poincar\'e distance on the quaternionic unit ball} \label{classicM}

A classical topic in quaternionic analysis is the study of M\"obius transformations. It is well known that the set of \emph{linear fractional transformations} of the extended quaternionic space $\hh \cup \{\infty\} \cong \mathbb{HP}^1$
\begin{equation}\label{IndianaM}
\mathbb{G}=\left\{g(q)=(aq+b)\cdot (cq+d)^{-1} \ \left|\ \begin{bmatrix}
a & b \\
c & d
\end{bmatrix} \in GL(2,\hh) \right\}\right.
\end{equation}
is a group with respect to the composition operation. We recall that $GL(2,\hh)$ denotes the group of $2 \times 2$ invertible quaternionic matrices, and that $SL(2,\hh)$ denotes the subgroup of those such matrices which have unit Dieudonn\'e determinant (for details, see \cite{poincare} and references therein). It is known in literature that $\mathbb{G}$ is isomorphic to $PSL(2,\hh)= SL(2,\hh) / \{\pm \I\}$ and that all of its elements are conformal maps. Among the works that treat this matter, even in the more general context of Clifford Algebras, let us mention \cite{ahlforsmoebius, maass, vahlen}. 
The alternative representation
\begin{equation}\label{InvM}
\mathbb{G}=\left\{F_A(q)=(qc+d)^{-1} \cdot (qa+b) \ \left|\ A=\begin{bmatrix}
a & c \\
b & d
\end{bmatrix} \in GL(2,\hh) \right\}\right.
\end{equation}   
is also possible, and the anti-homomorphism $GL(2,\hh) \to \mathbb{G}$ mapping $A$ to $F_A$ induces an anti-isomorphism between $PSL(2,\hh)$ and $\mathbb{G}$.
The group $\mathbb{G}$ is generated by the following four types of transformations:
\begin{itemize}
\item[(i)] $L_1(q)=q+b,\,\,\, b \in \mathbb{H};$
\item[(ii)] $L_2(q)=q \cdot a, \,\,\, a \in \mathbb{H}, |a|=1;$
\item[(iii)] $L_3(q)= r \cdot q = q \cdot r, \,\,\, r \in \mathbb{R}^+ \setminus \{ 0 \};$
\item[(iv)] $L_4(q)=q^{-1}.$
\end{itemize}
Moreover, if $\mathcal{S}_i$ is the family of all real $i$-dimensional spheres, if $\mathcal{P}_i$ is the family of all real $i$-dimensional affine subspaces of $\mathbb{H}$ and if $\mathcal{F}_i=\mathcal{S}_i \cup \mathcal{P}_i$ then $\mathbb{G}$ maps elements of $\mathcal{F}_i$ onto elements of $\mathcal{F}_i,$ for $i=3,2,1$. At this regard, see \cite{wilker}; detailed proofs of all these facts can be found in \cite{poincare}.

The subgroup $\mathbb{M}\leq\mathbb{G}$ of \emph{(classical) M\"obius transformations} mapping the quaternionic open unit ball 
$$\mathbb{B}=\{q \in \hh\ |\  |q|<1\}$$ 
onto itself, has also been studied in detail. Let us denote $H = 
\begin{bmatrix}
1 & 0 \\
0 & -1
\end{bmatrix}$ and 
$$Sp(1,1) = \{C \in GL(2, \hh) \ |\  \overline C ^t H C = H\}\subset SL(2, \hh).$$

\begin{theorem}
An element $g \in \mathbb{G}$ is a classical M\"obius transformation of $\B$ if and only if $g(q)=(qc+d)^{-1} \cdot (qa+b)$ with
$\begin{bmatrix}
a & c\\
b & d                                
\end{bmatrix}\in Sp(1,1)$. This is equivalent to
\begin{equation*}
g(q) = v^{-1} (1-q\bar q_0)^{-1}(q-q_0)u
\end{equation*}
for some $u,v \in \partial \B, q_0 \in \B$.
\end{theorem}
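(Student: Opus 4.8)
The plan is to characterize membership in $\m$ by a two-way implication, and then separately to show that the $Sp(1,1)$ condition is equivalent to the explicit normal form. I would organize the argument around the alternative representation \eqref{InvM}, since the matrix $A = \begin{bmatrix} a & c\\ b & d\end{bmatrix}$ appearing there is exactly the one constrained by $Sp(1,1)$.

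For the first equivalence, I would start with the \emph{if} direction. Suppose $\overline{A}^t H A = H$. Writing this out componentwise gives the relations $|a|^2 - |b|^2 = 1$, $|d|^2 - |c|^2 = 1$, and $\bar a c = \bar b d$ (together with their conjugates). The strategy is then to verify directly that $g = F_A$ maps $\partial\B$ into $\partial\B$ and $\B$ into $\B$, and that the same holds for $g^{-1}$, so that $g$ restricts to a bijection of $\B$. Concretely, for $|q|=1$ I would compute $|qa+b|^2$ and $|qc+d|^2$ and use the $Sp(1,1)$ relations to show these two quantities coincide, which forces $|g(q)| = |(qc+d)^{-1}(qa+b)| = 1$; a continuity/connectedness argument then pins down that $\B$ maps to $\B$ rather than to its exterior. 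For the \emph{only if} direction, I would argue that the subgroup of $\lft$ preserving $\B$ acts transitively on $\B$ (producing, for each $q_0\in\B$, an explicit element carrying $q_0$ to $0$), reduce to the stabilizer of $0$, show that the stabilizer consists exactly of the maps $q\mapsto v^{-1}qu$ with $u,v\in\partial\B$, and check that all these correspond to matrices in $Sp(1,1)$.

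For the second equivalence I would produce the normal form constructively. The key computation is to exhibit, for $q_0 \in \B$ and $u,v\in\partial\B$, an explicit matrix
\begin{equation*}
A = \begin{bmatrix} a & c\\ b & d\end{bmatrix}
\end{equation*}
in $Sp(1,1)$ such that $F_A(q) = v^{-1}(1-q\bar q_0)^{-1}(q-q_0)u$; the natural candidate has entries built from $(1-|q_0|^2)^{-1/2}$, $q_0$, $u$ and $v$, and one verifies $\overline{A}^t H A = H$ by the same three relations as above. Conversely, given an arbitrary $A\in Sp(1,1)$, I would set $q_0 := -g^{-1}(0)$ (equivalently read off $q_0$ from $b,d$) and then recover $u,v\in\partial\B$ from the remaining data, using $|a|^2-|b|^2=1$ to guarantee unit modulus after normalization. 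Care is needed here with the quaternionic noncommutativity, since one cannot freely reorder factors; the placement of $u$ on the right and $v^{-1}$ on the left is precisely what makes the identification with $F_A$ work.

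The main obstacle I anticipate is the noncommutativity of $\hh$ throughout these verifications: the $Sp(1,1)$ relations involve products like $\bar a c$ in a fixed order, and the identity $|qa+b| = |qc+d|$ on $\partial\B$ must be established without commuting quaternions past one another. In particular, showing that the normal form indeed lands in $\m$ (and conversely extracting $q_0,u,v$ from a general matrix) requires keeping scrupulous track of left versus right multiplication, and this bookkeeping — rather than any deep idea — is where the real work lies.
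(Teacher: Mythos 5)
The paper itself offers no proof of this theorem --- it explicitly defers to \cite{poincare} --- and your plan follows precisely the standard argument given in that reference: unpack $\overline{A}^t H A = H$ into $|a|^2-|b|^2 = 1 = |d|^2-|c|^2$ and $\bar a c = \bar b d$ (from which $|b|=|c|$, $|a|=|d|$ and $a\bar b = c\bar d$ also follow, which is what the cross terms in $|qa+b|^2 = |qc+d|^2$ on $\partial\B$ actually require), pin down $F_A(\B)=\B$ via $|F_A(0)|=|d^{-1}b|<1$ and connectedness, prove the converse by transitivity plus the stabilizer of $0$ (where $g(0)=0$ forces $b=0$, and constancy of $|qc+d|$ on the unit sphere then forces $c=0$, leaving $q\mapsto v^{-1}qu$), and exhibit the explicit matrix $(1-|q_0|^2)^{-1/2}\begin{bmatrix} u & -\bar q_0 v\\ -q_0 u & v\end{bmatrix}\in Sp(1,1)$ realizing the normal form. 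Your only slip is the recovery of $q_0$ in the converse of the second equivalence: since $g$ vanishes exactly at $q_0$, one has $q_0 = g^{-1}(0) = -ba^{-1}$, read off from $a$ and $b$ (not $-g^{-1}(0)$, and not from $b$ and $d$), a sign-and-bookkeeping error that does not affect the soundness of the approach.
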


For a proof, see \cite{poincare}. Hence, $\mathbb{M}$ is anti-isomorphic to $Sp(1,1)/\{\pm \I\}$.
Since $\mathbb{G}$ leaves invariant the family $\mathcal{F}_1$ of circles and affine lines of $\mathbb{H}$, and since the elements of $\mathbb{G}$ are conformal, the group $\mathbb{M}$ of classical M\"obius transformations of $\B$ preserves the following class of curves.
\begin{definition}\label{non-euclid}
If $q_1 \neq q_2 \in \B$ are $\mathbb{R}-$linearly dependent, then the diameter of $\B$ through $q_1,q_2$ is called the \emph{non-Euclidean line} through $q_1$ and $q_2$. Otherwise, the \emph{non-Euclidean line} through $q_1$ and $q_2$ is defined as the unique circle through $q_1,q_2$ that intersects $\partial \mathbb{B}= \s^3$ orthogonally.
\end{definition}

\begin{theorem}
The formula
\begin{equation}\label{poincaredist}
\delta_{\mathbb{B}}(q_1,q_2)
=\frac{1}{2}\log\left(\frac{1+|1-q_1\overline{q_2}|^{-1}|q_1-q_2|}{1-|1-q_1\overline{q_2}|^{-1}|q_1-q_2|}\right)
\end{equation}
(for $q_1,q_2 \in\B$) defines a distance that has the non-Euclidean lines as its geodesics. The elements of $\mathbb{M}$ and the map $q \mapsto \overline{q}$ are all isometries for $\delta_{\mathbb{B}}$. 
\end{theorem}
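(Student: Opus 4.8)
The plan is to read $\delta_\B$ as a monotone function of the quaternionic pseudo-hyperbolic quantity
\begin{equation*}
\rho(q_1,q_2)=|1-q_1\overline{q_2}|^{-1}\,|q_1-q_2|,
\end{equation*}
since $\delta_\B(q_1,q_2)=\tanh^{-1}\rho(q_1,q_2)$. The cornerstone is the scalar identity
\begin{equation*}
|1-q_1\overline{q_2}|^2-|q_1-q_2|^2=(1-|q_1|^2)(1-|q_2|^2),
\end{equation*}
which I would obtain by expanding both moduli and using $q\overline q=\overline q q=|q|^2\in\rr$; here non-commutativity is harmless because only real moduli survive the cancellation. For $q_1,q_2\in\B$ the right-hand side is positive, so $0\le\rho<1$, the logarithm is well defined, and $\delta_\B\ge 0$ with equality exactly when $q_1=q_2$. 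Symmetry follows from $|q_1-q_2|=|q_2-q_1|$ together with $|1-q_1\overline{q_2}|=|\overline{1-q_1\overline{q_2}}|=|1-q_2\overline{q_1}|$.

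Next I would show that every $g\in\m$ preserves $\rho$, which is the heart of the argument. The clean route is to pass to homogeneous coordinates $Z=(q,1)$ and use the indefinite Hermitian form $\langle Z,W\rangle_H=Z\,H\,\overline W^{\,t}$ attached to $H$, for which $\langle Z,Z\rangle_H=|q|^2-1$ and $|\langle Z_1,Z_2\rangle_H|=|1-q_1\overline{q_2}|$. The above identity then rewrites the $Sp(1,1)$-invariant ratio as
\begin{equation*}
\frac{|\langle Z_1,Z_2\rangle_H|^2}{\langle Z_1,Z_1\rangle_H\,\langle Z_2,Z_2\rangle_H}=\frac{|1-q_1\overline{q_2}|^2}{(1-|q_1|^2)(1-|q_2|^2)}=\frac{1}{1-\rho^2},
\end{equation*}
so that invariance of the Hermitian form under $Sp(1,1)$ forces invariance of $\rho$, hence of $\delta_\B$, under $\m$. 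The step requiring care is matching the side of the action in representation \eqref{InvM} with the defining relation $\overline C^{\,t}HC=H$; alternatively, one may verify invariance directly on the normal form $g(q)=v^{-1}(1-q\overline{q_0})^{-1}(q-q_0)u$ of the first Theorem, where the rotations $q\mapsto v^{-1}qu$ (with $|u|=|v|=1$) visibly fix both $|q_1-q_2|$ and $|1-q_1\overline{q_2}|$, leaving only the factor $q\mapsto(1-q\overline{q_0})^{-1}(q-q_0)$ to be handled by an explicit quaternionic identity. The isometry $q\mapsto\overline q$ is immediate, since $|1-\overline{q_1}q_2|=|1-q_1\overline{q_2}|$ because both moduli equal $1-2\,\mathrm{Re}(q_1\overline{q_2})+|q_1|^2|q_2|^2$.

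To finish the metric axioms I would prove the triangle inequality using invariance together with transitivity: the map $q\mapsto(1-q\overline{q_2})^{-1}(q-q_2)$ lies in $\m$ (take $u=v=1$) and sends $q_2\mapsto 0$, so it suffices to prove $\delta_\B(a,c)\le\delta_\B(a,0)+\delta_\B(0,c)$. Applying $\tanh$ and the addition formula reduces this to $\rho(a,c)\le\frac{|a|+|c|}{1+|a|\,|c|}$, which after squaring and clearing denominators becomes the inequality $2(1-|a|^2)(1-|c|^2)\big(\mathrm{Re}(a\overline c)+|a|\,|c|\big)\ge 0$, valid because $\mathrm{Re}(a\overline c)\ge-|a|\,|c|$. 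This shows $\delta_\B$ is a distance.

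Finally, for the geodesics I would again reduce by invariance to the non-Euclidean lines through $0$, which are exactly the diameters. On a diameter, for $0\le s\le t<1$ and $|u|=1$ one computes $\rho(su,tu)=\frac{t-s}{1-st}$, whence $\delta_\B(0,su)+\delta_\B(su,tu)=\delta_\B(0,tu)$ by the subtraction formula for $\tanh^{-1}$; thus diameters are geodesics. Since $\m\le\lft$ preserves the family $\f_1$ and the sphere $\partial\B=\s^3$, and its elements are conformal hence orthogonality-preserving, $\m$ carries non-Euclidean lines to non-Euclidean lines and, acting transitively, maps any non-Euclidean line onto a diameter; transporting the additivity back by the isometry $g^{-1}$ shows every non-Euclidean line is a geodesic. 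I expect the invariance of $\rho$ under $\m$ to be the main obstacle: once the Hermitian-invariant set-up (or the explicit normal-form identity) is in place, everything else follows from the scalar identity and the elementary addition formulae for $\tanh$.
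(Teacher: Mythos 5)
Your proof is correct, and it follows essentially the same route as the paper's source: the paper does not prove this theorem in the text, deferring instead to \cite{ahlfors1988} and the detailed presentation in \cite{poincare}, where the argument rests, as yours does, on the identity $|1-q_1\overline{q_2}|^2-|q_1-q_2|^2=(1-|q_1|^2)(1-|q_2|^2)$, invariance of the pseudo-hyperbolic ratio under $\m$, transitivity to reduce the triangle inequality to a vertex at the origin, and transport of the diameter computation to general non-Euclidean lines. The two points you flag as delicate do close up: since $Sp(1,1)$ is a group closed under inversion, $\overline{C}^{\,t}HC=H$ forces $CH\overline{C}^{\,t}=H$ as well, which is exactly the invariance needed for the form $Z H \overline{W}^{\,t}$ under the right action on row vectors $(q,1)$ matching representation \eqref{InvM}; and the equality case $\mathrm{Re}(a\overline{c})=-|a|\,|c|$ in your triangle-inequality computation (attained precisely when $0$ lies on the diameter segment between $a$ and $c$) is what upgrades additivity along diameters to the statement that the non-Euclidean lines are the geodesics.
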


We refer the reader to \cite{ahlfors1988}; a detailed presentation can be found in \cite{poincare}.

So far, we mentioned properties of the classical M\"obius transformations that are completely analogous to the complex case. However, the analogy fails on one crucial point. The group $\mathbb{M}$ is not included in the best known analog of the class of holomorphic functions: the set of Fueter regular functions, i.e., the kernel of $\frac{\partial}{\partial x_0}+i\frac{\partial}{\partial x_1}+ j\frac{\partial}{\partial x_2}+k\frac{\partial}{\partial x_3}$ (see \cite{sudbery}). For instance, none of the rotations $q \mapsto aq$ with $a \in \hh, a \neq 0$ is Fueter regular, nor are any of the transformations listed as (i),(ii),(iii),(iv) in our previous discussion. The variant of the Fueter class considered in \cite{perotti2009}, defined as the kernel of $\frac{\partial}{\partial x_0}+i\frac{\partial}{\partial x_1}+ j\frac{\partial}{\partial x_2}-k\frac{\partial}{\partial x_3}$, includes part of the group, for instance the rotations $q \mapsto qb$ for $b \in \hh, b \neq 0$, but not all of it (for instance $q \mapsto kq$ is not in the kernel, nor is $q \mapsto q^{-1}$). 

A more recent theory of quaternionic functions, introduced in \cite{cras,advances}, has proven to be more comprehensive. The theory is based on the next definition.

\begin{definition}
Let $\Omega$ be a domain in $\hh$ and let $f : \Omega \to \hh$ be a function. For all $I \in \s = \{q \in \hh \ |\  q^2 = -1\}$, let us denote $L_I = \rr + I \rr$, $\Omega_I = \Omega \cap L_I$ and $f_I = f_{|_{\Omega_I}}$. 
The function $f$ is called (Cullen or) \emph{slice regular} if, for all $I \in \s$, the function $\bar \partial_I f : \Omega_I \to \hh$ defined by
$$
\bar \partial_I f (x+Iy) = \frac{1}{2} \left( \frac{\partial}{\partial x}+I\frac{\partial}{\partial y} \right) f_I (x+Iy)
$$
vanishes identically.
\end{definition}

By direct computation, the class of slice regular functions includes all of the generators we listed as (i),(ii),(iii),(iv). It does not contain the whole group $\mathbb{G}$ (nor its subgroup $\mathbb{M}$), because composition does not, in general, preserve slice regularity. However, \cite{moebius} introduced the new classes of \emph{(slice) regular fractional transformations} and \emph{(slice) regular M\"obius transformations} of $\B$, which are nicely related to the classical linear fractional transformations. They are presented in detail in sections \ref{regularfractional} and \ref{regularmoebius}.

One of the purposes of the present paper is, in fact, to compare the slice regular fractional transformations with the classical ones. Furthermore, we take a first glance at the role played by slice regular M\"obius transformations in the geometry of $\B$. In section \ref{differential}, we undertake a first study of their differential properties: we prove that they are not, in general, conformal, and that they do not preserve the Poincar\'e distance $\delta_\B$. In section \ref{schwarzpick}, we overview the main results of \cite{schwarzpick}. Among them is a quaternionic analog of the Schwarz-Pick lemma, which discloses the possibility of using slice regular functions in the study of the intrinsic geometry of $\B$.


\section{Regular fractional transformations}\label{regularfractional}

This section surveys the algebraic structure of slice regular functions, and its application to the construction of regular fractional transformations. From now on, we will omit the term `slice' and refer to these functions as regular, {\it tout court}.
Since we will be interested only in regular functions on Euclidean balls
$$B(0,R) = \{q \in \hh \ |\  |q| <R\},$$
or on the whole space $\hh = B(0,+\infty)$, we will follow the presentation of \cite{zeros,poli}. However, we point out that many of the results we are about to mention have been generalized to a larger class of domains in \cite{advancesrevised}.

\begin{theorem} 
Fix $R$ with $0<R\leq + \infty$ and let $\mathcal{D}_R$ be the set of regular functions $f:B(0,R)\to \hh$. Then $\mathcal{D}_R$  coincides with the set of quaternionic power series $f(q) =\sum_{n \in \nn} q^n a_n$ (with $a_n \in \hh$) converging in $B(0,R)$. Moreover, defining the \emph{regular multiplication} $*$ by the formula
\begin{equation}
\left(\sum_{n \in \nn} q^n a_n\right) *  \left(\sum_{n \in \nn} q^n b_n\right)= \sum_{n \in \nn} q^n \sum_{k=0}^n a_k b_{n-k},\end{equation}
we conclude that $\mathcal{D}_R$ is an associative real algebra with respect to $+,*$.
\end{theorem}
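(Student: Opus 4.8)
The plan is to prove the statement in three stages: that every convergent power series is regular, that every regular function is such a power series, and finally that $(\mathcal{D}_R, +, *)$ is an associative real algebra over $\rr$. The first and third stages are essentially computational, while the second is the substantive point.

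First, to see that a series $f(q) = \sum_{n \in \nn} q^n a_n$ converging in $B(0,R)$ is regular, I would fix $I \in \s$ and restrict to the slice $L_I = \rr + I\rr$, which is a commutative field isomorphic to $\cc$. Writing $q = x + Iy$, a direct computation inside $L_I$ gives $(\partial_x + I \partial_y)(x+Iy)^n = n(x+Iy)^{n-1} - n(x+Iy)^{n-1} = 0$, so each monomial $q^n a_n$ is annihilated by $\bar\partial_I$ (the right coefficient $a_n$ plays no role). Since the power series and its termwise derivatives converge locally uniformly on $B(0,R)$, differentiation under the summation is legitimate and $\bar\partial_I f \equiv 0$ on $\Omega_I$ for every $I$; hence $f$ is regular.

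For the converse, I would invoke the Splitting Lemma: fixing $I \in \s$ and choosing $J \in \s$ with $J$ orthogonal to $I$, the restriction $f_I$ decomposes as $f_I = F + GJ$ with $F, G : B(0,R) \cap L_I \to L_I$, and the vanishing of $\bar\partial_I f$ is equivalent to $F$ and $G$ satisfying the classical Cauchy--Riemann equations on the disk $B(0,R) \cap L_I \cong \D_R \subset \cc$. Thus $F$ and $G$ are holomorphic, hence real-analytic, and $f_I$ expands as a convergent series $\sum_n (x+Iy)^n a_n^I$ on that disk. The key remaining point is that the coefficients $a_n^I$ do not depend on $I$: I would identify each with the value of the $n$-th slice derivative at the origin, $a_n = \frac{1}{n!}\,\partial_c^n f(0)$, and exploit the fact that $0$ is a real point lying in every slice, so that these values are intrinsic. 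This yields a single quaternionic series $\sum_n q^n a_n$ agreeing with $f$ on every $L_I$, with radius of convergence at least $R$.

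Finally, for the algebraic structure, $\mathcal{D}_R$ is evidently a real vector space, and closure under $*$ follows from the standard estimate showing that the Cauchy product of two series of radius at least $R$ again has radius at least $R$. Associativity of $*$ I would verify by comparing the coefficient of $q^n$ on both sides of $(f*g)*h$ and $f*(g*h)$: each equals $\sum_{i+j+k=n} a_i b_j c_k$, the equality resting solely on the associativity of quaternionic multiplication. Together with bilinearity over $\rr$ and the constant function $1$ as unit, this establishes the asserted algebra structure. I expect the main obstacle to be the converse direction, and specifically the passage from the slicewise holomorphic expansions to a single globally defined quaternionic series, i.e.\ the verification that the coefficients are independent of the chosen slice.
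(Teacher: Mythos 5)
Your proposal is correct, but note that the paper itself states this theorem without proof, as a survey of results from the cited references (\cite{zeros,poli} and the original theory papers): your argument --- termwise verification that power series are regular, the Splitting Lemma plus slicewise holomorphy for the converse, identification of the coefficients as $\frac{1}{n!}\partial_c^n f(0)$ via derivatives along the real axis at the real point $0$ (hence slice-independent), and the Cauchy-product estimate with coefficientwise associativity for the algebra structure --- is precisely the standard proof given in that literature. You also correctly isolated the one genuinely delicate step, the slice-independence of the coefficients, and your justification of it (all real-direction derivatives at $0$ are determined by $f|_{\rr}$ alone) is sound.
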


The ring $\mathcal{D}_R$ admits a classical ring of quotients 
$$\mathcal{L}_R = \{f^{-*}*g \ |\  f,g \in \mathcal{D}_R, f \not \equiv 0\}.$$ 
In order to introduce it, we begin with the following definition.

\begin{definition}\label{conjugate}
Let $f(q) = \sum_{n \in \nn} q^n a_n$ be a regular function on an open ball $B = B(0,R)$. The \textnormal{regular conjugate} of $f$, $f^c : B \to \hh$, is defined as $f^c(q) = \sum_{n \in \nn} q^n \bar a_n$ and the \textnormal{symmetrization} of $f$, as $f^s = f * f^c = f^c*f$. 
\end{definition}

Notice that $f^s(q) = \sum_{n \in \nn} q^n r_n$ with $r_n = \sum_{k = 0}^n a_k \bar a_{n-k} \in \rr$. Moreover, the zero-sets of $f^c$ and $f^s$ have been fully characterized.

\begin{theorem}\label{conjugatezeros}
Let $f$ be a regular function on $B = B(0,R)$. For all $x,y \in \rr$ with $x+y\s \subseteq B$, the regular conjugate $f^c$ has as many zeros as $f$ in $x+y\s$. Moreover, the zero set of the symmetrization $f^s$ is the union of the $x+y\s$ on which $f$ has a zero.
\end{theorem}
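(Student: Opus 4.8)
The plan is to exploit the elementary structure of a regular function on a single sphere $x+y\s$. Fix $x,y\in\rr$ with $x+y\s\subseteq B$ and expand $(x+yI)^n=A_n+IB_n$, which via $I^2=-1$ produces coefficients $A_n,B_n\in\rr$ that do not depend on $I\in\s$. Writing $f(q)=\sum_{n\in\nn}q^na_n$, I would first record that
\[
f(x+yI)=\alpha+I\beta,\qquad \alpha=\sum_{n\in\nn}A_na_n,\quad \beta=\sum_{n\in\nn}B_na_n,
\]
with $\alpha,\beta\in\hh$ independent of $I$. This reduces the zeros of $f$ on $x+y\s$ to solving $\alpha+I\beta=0$: if $\beta=0$ the sphere carries no zero (when $\alpha\neq0$) or consists entirely of zeros (when $\alpha=0$), while if $\beta\neq0$ the only candidate is $I=-\alpha\beta^{-1}$, which is an actual zero precisely when $-\alpha\beta^{-1}\in\s$.

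For the first assertion I would apply the same expansion to the conjugate. Since the $A_n,B_n$ are real, $f^c(x+yI)=\sum_{n\in\nn}(A_n+IB_n)\bar a_n=\bar\alpha+I\bar\beta$, so the discussion above applies verbatim to $f^c$ with $\alpha,\beta$ replaced by $\bar\alpha,\bar\beta$; in the degenerate case $\beta=0$ the conclusions for $f$ and $f^c$ coincide at once. In the case $\beta\neq0$ the unique candidate zero of $f^c$ is $I=-\bar\alpha\bar\beta^{-1}$, and the crux is to see that $-\bar\alpha\bar\beta^{-1}\in\s$ if and only if $-\alpha\beta^{-1}\in\s$. This follows because $\s$ is a single conjugacy class of $\hh$, closed both under quaternionic conjugation and under similarity: the conjugate $\overline{-\bar\alpha\bar\beta^{-1}}=-\beta^{-1}\alpha$ is similar to $-\alpha\beta^{-1}$ (conjugate it by $\beta$), so the three elements $-\alpha\beta^{-1}$, $-\beta^{-1}\alpha$ and $-\bar\alpha\bar\beta^{-1}$ lie in $\s$ simultaneously. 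Hence $f$ and $f^c$ have the same number of zeros (none, one, or the whole sphere) in $x+y\s$.

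For the second assertion I would first use that $f^s$ has real coefficients $r_n$, so the same expansion gives $f^s(x+yI)=\big(\sum_{n\in\nn}A_nr_n\big)+I\big(\sum_{n\in\nn}B_nr_n\big)$, a value whose vanishing does not depend on $I$. Consequently $f^s$ vanishes either nowhere or everywhere on each sphere $x+y\s$, so its zero set is automatically a union of such spheres, and it remains only to identify which ones. Here I would invoke the standard evaluation formula for the regular product (established in the references cited above), namely that $(f*g)(q)=0$ whenever $f(q)=0$, and that $(f*g)(q)=f(q)\,g\big(f(q)^{-1}qf(q)\big)$ when $f(q)\neq0$. Writing $f^s=f*f^c$: if $f$ vanishes at some point of $x+y\s$ then $f^s$ vanishes there too, hence on the whole sphere. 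Conversely, if $f^s(q)=0$ with $f(q)\neq0$, the formula forces $f^c$ to vanish at $f(q)^{-1}qf(q)$, which is a conjugate of $q$ and therefore lies again in $x+y\s$; by the first assertion $f$ then also has a zero in $x+y\s$. This yields exactly the claimed description of the zero set of $f^s$.

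The main obstacle is the converse inclusion for $f^s$: passing from a zero of the symmetrization back to a zero of $f$ genuinely requires the pointwise behaviour of the $*$-product, together with the observation that the twist $q\mapsto f(q)^{-1}qf(q)$ preserves the sphere $x+y\s$. The comparison of $f$ and $f^c$ relies instead on the conjugacy-class description of $\s$, which is the one subtlety that must not be overlooked.
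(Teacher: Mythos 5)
Your proof is correct, and it is essentially the standard argument: the paper itself states this theorem without proof (it is quoted from \cite{zeros,poli}), and your route --- splitting $f(x+yI)=\alpha+I\beta$ with $\alpha,\beta$ independent of $I\in\s$, transferring the zero count to $f^c(x+yI)=\bar\alpha+I\bar\beta$ via the fact that $\s$ is a conjugacy class closed under quaternionic conjugation, and settling $\z_{f^s}$ through the pointwise formula $f*g(q)=f(q)\,g\bigl(f(q)^{-1}qf(q)\bigr)$ of theorem \ref{quotients} --- is precisely the one used in those references. All the delicate points (the case $\beta=0$, the convention $f*g(q)=0$ when $f(q)=0$, and the fact that $q\mapsto f(q)^{-1}qf(q)$ preserves each sphere $x+y\s$) are handled correctly.
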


We are now ready for the definition of regular quotient. We denote by 
$$\z_h = \{q \in B \ |\  h(q) = 0\}$$ 
the zero-set of a function $h$.

\begin{definition}\label{quotient}
Let $f,g : B = B(0,R) \to \hh$ be regular functions. The \emph{left regular quotient} of $f$ and $g$ is the function $f^{-*} * g$ defined in $B \setminus \z_{f^s}$ by 
\begin{equation}
f^{-*} * g (q) = {f^s(q)}^{-1} f^c * g(q).
\end{equation} 
Moreover, the \emph{regular reciprocal} of $f$ is the function $f^{-*} = f^{-*} * 1$.
\end{definition}

Left regular quotients proved to be regular in their domains of definition. 
If  we set $(f^{-*}*g)*(h^{-*}*k) = (f^{s}h^{s})^{-1} f^c*g*h^c*k$ then $(\mathcal{L}_R,+,*)$ is a division algebra over $\mathbb{R}$ and it is the classical ring of quotients of $(\mathcal{D}_R,+,*)$ (see \cite{rowen}). In particular, $\mathcal{L}_R$ coincides with the set of \emph{right regular quotients} 
$$g*h^{-*} (q) = {h^s(q)}^{-1} g * h^c(q).$$ 
The definition of regular conjugation and symmetrization is extended to $\mathcal{L}_R$ setting $(f^{-*}*g)^c = g^c*(f^c)^{-*}$ and $(f^{-*}*g)^s(q) = {f^s(q)}^{-1}g^s(q)$.
Furthermore, the following relation between the left regular quotient $f^{-*} * g(q)$ and the quotient $f(q)^{-1} g (q)$ holds. 

\begin{theorem}\label{quotients}
Let $f,g$ be regular functions on $B=B(0,R)$. Then
\begin{equation}
f*g(q) = f(q) g(f(q)^{-1}qf(q)),
\end{equation}
and setting $T_f(q) = f^c(q)^{-1} q f^c(q)$ for all $q \in B \setminus \z_{f^s}$,
\begin{equation}
f^{-*}*g(q) = f(T_f(q))^{-1} g(T_f(q)),
\end{equation}
for all $q \in B \setminus \z_{f^s}$. For all $x,y \in \rr$ with $x+y\s\subset B \setminus \z_{f^s}$, the function $T_f$ maps $x+y\s$ to itself (in particular $T_f(x) = x$ for all $x \in \rr$). Furthermore, $T_f$ is a diffeomorphism from $B \setminus \z_{f^s}$ onto itself, with inverse $T_{f^c}$.
\end{theorem}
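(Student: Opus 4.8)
The plan is to reduce everything to the first identity, the $*$-product formula, and then obtain the remaining assertions as formal consequences. To prove $f*g(q) = f(q)\,g(f(q)^{-1}qf(q))$ at a point where $f(q)\neq 0$, I would write $f(q)=\sum_n q^n a_n$, $g(q)=\sum_n q^n b_n$ and set $w=f(q)$. The crucial observation is that conjugation interacts well with powers, $(w^{-1}qw)^m=w^{-1}q^m w$, so that
\begin{equation*}
g(w^{-1}qw)=\sum_m w^{-1}q^m w\, b_m, \qquad f(q)\,g(w^{-1}qw)=\sum_m q^m w\, b_m.
\end{equation*}
Substituting $w=\sum_n q^n a_n$, reindexing the absolutely convergent double series by $N=m+n$ and collecting terms yields $\sum_N q^N\sum_{k=0}^N a_k b_{N-k}=f*g(q)$. (Where $f(q)=0$ the displayed formula is vacuous, and there one invokes the separate fact that $f*g$ vanishes as well.)

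For the second identity I would exploit the symmetrization. On $B\setminus\z_{f^s}$ one has $f^c(q)\neq 0$ by Theorem~\ref{conjugatezeros}, so $T_f$ is defined. Applying the product formula to $f^c*g$ gives directly $f^c*g(q)=f^c(q)\,g(T_f(q))$, while applying it instead to $f^s=f^c*f$ gives $f^s(q)=f^c(q)\,f(T_f(q))$, whence $f^s(q)^{-1}f^c(q)=f(T_f(q))^{-1}$ and in particular $f(T_f(q))\neq 0$. Combining these with the definition $f^{-*}*g(q)=f^s(q)^{-1}f^c*g(q)$ produces $f^{-*}*g(q)=f(T_f(q))^{-1}g(T_f(q))$.

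The statements about $T_f$ are then geometric. Since $T_f(q)=f^c(q)^{-1}qf^c(q)$ is a quaternionic conjugation of $q$, it preserves both $\mathrm{Re}(q)$ and $|q|$; as $x+y\s$ is exactly the set of quaternions with real part $x$ and modulus $\sqrt{x^2+y^2}$, the sphere $x+y\s$ is mapped into itself, and $T_f(x)=x$ for real $x$. For the inverse, I would note $(f^c)^c=f$ and $(f^c)^s=f^s$, so that $T_{f^c}(q)=f(q)^{-1}qf(q)$ is defined on the same set $B\setminus\z_{f^s}$. Writing $p=T_f(q)$ and using $f(p)=f^c(q)^{-1}f^s(q)$ from the previous step, a short substitution gives $T_{f^c}(p)=f^s(q)^{-1}\,q\,f^s(q)$; since $f^s$ has real coefficients it commutes with $q$, so $T_{f^c}(T_f(q))=q$, and symmetrically $T_f(T_{f^c}(q))=q$. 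As both maps are smooth wherever $f^c$, respectively $f$, is nonzero, $T_f$ is a diffeomorphism of $B\setminus\z_{f^s}$ onto itself.

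The main obstacle is the first identity: once the conjugation-of-powers trick is in hand the computation is routine, but justifying the rearrangement of the double series (absolute convergence on compact subsets of the ball) and correctly treating the locus $f(q)=0$ require some care. Everything afterwards is formal algebra in the quotient ring, together with the elementary fact that a series with real coefficients commutes with $q$.
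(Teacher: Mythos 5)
Your proposal is correct, and it follows essentially the standard argument: the paper itself states this theorem without proof (it is survey material from the cited works on zeros and quotients of regular functions), and the proofs given there proceed exactly as you do --- the conjugation-of-powers identity $(w^{-1}qw)^m = w^{-1}q^mw$ with $w=f(q)$ for the $*$-product formula, then applying it to $f^c*g$ and to $f^s=f^c*f$ to get the quotient formula, with the sphere-preservation and the inversion $T_{f^c}\circ T_f=\mathrm{id}$ (via the real coefficients of $f^s$, which make $f^s(q)$ commute with $q$) handled as you describe. Your treatment of the locus $f(q)=0$ (interpreting the product formula as $f*g(q)=0$ there, provable by continuity and density of the complement of the zero set) likewise matches the original statement.
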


We point out that, so far, no simple result relating $g*h^{-*}(q)$ to $g(q) h(q)^{-1}$ is known.

This machinery allowed the introduction in \cite{moebius} of regular analogs of linear fractional transformations, and of M\"obius transformations of $\B$. To each 
$A=\begin{bmatrix}
a & c \\
b & d
\end{bmatrix} \in GL(2, \hh)$ we can associate the \emph{regular fractional transformation} 
$$\f_A (q) = (qc+d)^{-*}*(qa+b).$$
By the formula $(qc+d)^{-*}*(qa+b)$ we denote the aforementioned left regular quotient $f^{-*}*g$ of $f(q) = qc+d$ and $g(q) = qa +b$. We denote the $2 \times 2$ identity matrix as $\I$.

\begin{theorem}
Choose $R>0$ and consider the ring of quotients of regular quaternionic functions in $B(0,R)$, denoted by $\mathcal{L}_R$.
Setting 
\begin{equation}
f.A = (f c + d)^{-*}*(f a+ b)
\end{equation} 
for all $f \in \mathcal{L}_R$ and for all $A =
\begin{bmatrix}
a & c \\
b & d
\end{bmatrix} \in GL(2,\hh)$
defines a right action of $GL(2,\hh)$ on $\mathcal{L}_R$. A left action of $GL(2,\hh)$ on $\mathcal{L}_R$ is defined setting
\begin{equation}
^{t}A.f=(a*f+b)*(c*f+d)^{-*}.
\end{equation} 
The stabilizer of any element of $\mathcal{L}_R$ with respect to either action includes the normal subgroup $N=\left \{t \cdot \I\ |\  t \in \rr \setminus \{0\} \right \}\unlhd GL(2,\hh)$. Both actions are faithful when reduced to $PSL(2,\hh) = GL(2,\hh)/N$.
\end{theorem}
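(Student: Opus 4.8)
The plan is to check the two defining axioms for each action, and then to treat the stabilizer and the faithfulness separately. Throughout I work inside the division algebra $(\mathcal{L}_R,+,*)$, using freely that every nonzero element has a two-sided $*$-inverse, that $(h^{-*}*k)^{-*}=k^{-*}*h$ and $(k*h^{-*})^{-*}=h*k^{-*}$, and that for a quaternionic constant $\lambda$ one has $f*\lambda=\sum_n q^n a_n\lambda$ and $\lambda*f=\sum_n q^n\lambda a_n$, so that constants associate through $*$ and real constants are central. (The reciprocals are taken in $\mathcal{L}_R$; to keep each transformation globally single-valued one passes to the projective line $\mathcal{L}_R\cup\{\infty\}$, which does not affect the algebra below.)

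For the right action the identity axiom $f.\I=f$ is immediate. The composition axiom $(f.A).B=f.(AB)$ is the heart of the matter: I set $g=f.A=(fc+d)^{-*}*(fa+b)$ and expand $g.B=(gc'+d')^{-*}*(ga'+b')$. The decisive step is to factor $(fc+d)^{-*}$ out of both pieces, using $d'=(fc+d)^{-*}*(fc+d)*d'$ together with associativity and the constant rules, to obtain
\[
gc'+d'=(fc+d)^{-*}*\big[f*(ac'+cd')+(bc'+dd')\big],
\]
\[
ga'+b'=(fc+d)^{-*}*\big[f*(aa'+cb')+(ba'+db')\big].
\]
Applying $(h^{-*}*k)^{-*}=k^{-*}*h$ to the first line cancels both copies of $(fc+d)^{\pm *}$, leaving exactly $(f\tilde c+\tilde d)^{-*}*(f\tilde a+\tilde b)$, where $\tilde a,\tilde b,\tilde c,\tilde d$ are the entries of the product $AB$; this is $f.(AB)$. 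The left action is verified by the mirror-image computation: writing the inner transform as $(a'*f+b')*(c'*f+d')^{-*}$, one factors $(c'*f+d')^{-*}$ out on the right and uses $(k*h^{-*})^{-*}=h*k^{-*}$ to recover the composition law, the transpose in ${}^tA$ being precisely what turns the natural anti-homomorphism into a genuine left action.

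The stabilizer claim is a short calculation: for $t\in\rr\setminus\{0\}$ one has $f.(t\I)=t^{-*}*(f*t)=t^{-1}*(f*t)=f$ because $t$ is real and hence central, and the same holds for the left action. Thus $N$ lies in the kernel of both actions.

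Finally, faithfulness reduces to showing that these kernels are exactly $N$. Evaluating the right action on the test element $f=q$ gives $q.A=\f_A(q)=(qc+d)^{-*}*(qa+b)$, and the requirement $q.A=q$ is equivalent to $qa+b=(qc+d)*q=qd+q^2c$; comparing power-series coefficients forces $b=0$, $c=0$, $a=d$, so $A=a\I$. The point I expect to be genuinely non-obvious is that this does not yet give $A\in N$: one computes $f.(a\I)=a^{-1}*(f*a)=\sum_n q^n a^{-1}a_n a$, which equals $f$ for every $f$ only when $a^{-1}\mu a=\mu$ for all $\mu\in\hh$, i.e. only when $a\in\rr$. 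Testing on the two constant functions $f\equiv i$ and $f\equiv j$ already forces $a$ to commute with $i$ and with $j$, hence $a\in\rr$ and $A=a\I\in N$; the left action is identical after transposition. Therefore each kernel equals $N$, and the induced actions of $PSL(2,\hh)=GL(2,\hh)/N$ are faithful. The most laborious step is the composition identity, whereas the conceptually delicate one is that faithfulness must be read off from non-real elements of $\mathcal{L}_R$, not from $q$ alone.
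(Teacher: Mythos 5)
Your proof is correct and takes essentially the same route as the paper, which does not prove the theorem in-text but defers the right-action statements to the reference it cites and notes the rest "can be similarly derived": your direct verification (composition law by factoring $(fc+d)^{-*}$ out and cancelling via $(h^{-*}*k)^{-*}=k^{-*}*h$, triviality of $t\,\I$ by centrality of reals, and kernel exactly $N$ by testing on $f=q$ and then on non-real constants) is precisely that standard computation, and it is consistent with the paper's own later remark that the stabilizer of $id$ is $\{c\cdot\I \mid c\in\hh\setminus\{0\}\}$. Your parenthetical caveat about undefined denominators (passing to $\mathcal{L}_R\cup\{\infty\}$) correctly flags the one genuine subtlety in stating the action on $\mathcal{L}_R$.
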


The statements concerning the right action are proven in \cite{moebius}, the others can be similarly derived. The two actions coincide in one special case.

\begin{proposition}\label{hermitian}
For all Hermitian matrices $A = \begin{bmatrix}
a & \bar b \\
b & d
\end{bmatrix}$ with $a,d \in \rr, b \in \hh$, 
$$f.A = (f \bar b + d)^{-*}*(f a+ b) =(a*f+b)*(\bar b*f+d)^{-*}= A^{t}.f$$
\end{proposition}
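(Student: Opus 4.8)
The plan is to recognize that the two outer equalities are purely definitional and that all the content sits in the middle one. Reading off the definition of the right action with $A = \begin{bmatrix} a & \bar b \\ b & d\end{bmatrix}$ (so that the entry $c$ equals $\bar b$) gives $f.A = (f\bar b + d)^{-*}*(fa+b)$, while the left-action formula applied to $A^t$ produces $A^t.f = (a*f+b)*(\bar b * f + d)^{-*}$; since $a,d \in \rr$ are central for $*$ we may freely write $fa = a*f$ throughout. Thus it remains only to establish
$$(f * \bar b + d)^{-*}*(f*a+b) = (a*f+b)*(\bar b * f + d)^{-*}.$$

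To streamline this I would introduce the abbreviations $p = f*\bar b + d$, $g = a*f + b = f*a + b$, and $\tilde p = \bar b * f + d$, so that the target identity reads $p^{-*}*g = g * \tilde p^{-*}$. Working inside the division algebra $\mathcal{L}_R$, where $p * p^{-*} = p^{-*}*p = 1$ and likewise for $\tilde p$, this is equivalent (multiply on the left by $p$ and on the right by $\tilde p$) to the single \emph{product} identity
$$p * g = g * \tilde p.$$
The payoff of this reformulation is that it removes the regular reciprocals entirely and reduces the claim to a bare computation with the associative product $*$.

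The product identity I would then verify by expanding both sides. The two structural facts in play are that the real numbers $a,d$ are central for $*$ and may be pulled out of any $*$-product, and that the genuinely quaternionic constants satisfy $\bar b * b = b * \bar b = |b|^2 \in \rr$. Expanding $p*g = (f*\bar b + d)*(f*a+b)$ by associativity, the mixed term $f*\bar b * b$ collapses to $|b|^2 f$, and the expression reduces to $a\,(f*\bar b * f) + |b|^2 f + ad\,f + db$. Expanding $g * \tilde p = (f*a+b)*(\bar b * f + d)$ similarly, the mixed term $b * \bar b * f$ collapses to $|b|^2 f$, leaving $a\,(f*\bar b * f) + ad\,f + |b|^2 f + bd$. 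Since $db = bd$ and the term $|b|^2 f$ occurs on both sides, the two expansions coincide, which proves $p*g = g*\tilde p$ and hence the proposition.

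The step I expect to require the most care is precisely the bookkeeping in this expansion: $f * \bar b$ (right multiplication of the coefficients of $f$ by $\bar b$) and $\bar b * f$ (left multiplication) are genuinely distinct elements of $\mathcal{L}_R$ and must not be interchanged. The identity survives only because in each product the conjugate pair $b,\bar b$ becomes adjacent and fuses into the real scalar $|b|^2$, after which centrality of $a,d$ does the rest. A secondary point to treat carefully is the passage between the quotient identity $p^{-*}*g = g*\tilde p^{-*}$ and the product identity $p*g = g*\tilde p$: this relies on $p^s$ and $\tilde p^s$ not vanishing identically, so that both sides are defined on a common subdomain of $B \setminus \z_{p^s}$, together with the division-algebra relations for regular reciprocals recalled after Definition~\ref{quotient}.
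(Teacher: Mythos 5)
Your proof is correct and follows essentially the same route as the paper: cross-multiplying to reduce the quotient identity to the product identity $(f\bar b + d)*(fa+b) = (fa+b)*(\bar b * f + d)$ in the division algebra $\mathcal{L}_R$, then expanding both sides using centrality of $a,d \in \rr$ and $\bar b * b = |b|^2$ to see they agree. Your additional remarks on domains and on keeping $f*\bar b$ distinct from $\bar b * f$ are sound but just make explicit what the paper leaves implicit.
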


\begin{proof}
We observe that
$$(f \bar b + d)^{-*}*(f a+ b) =(a*f+b)*(\bar b*f+d)^{-*}$$
if, and only if,
$$(f a+ b)*(\bar b*f+d) =(f \bar b + d)*(a*f+b),$$
which is equivalent to
$$a f* \bar b*f + |b|^2 f+ ad f + db = a f* \bar b*f +ad f+ |b|^2 f +db.$$
\end{proof}

In general, a more subtle relation holds between the two actions.

\begin{remark}\label{leftright}
For all $A \in GL(2,\hh)$ and for all $f\in \mathcal{L}_R$, by direct computation 
\begin{equation*}
\left(f.A\right)^c=\bar{A}^t.f^c
\end{equation*}
As a a consequence, if $A\in GL(2, \hh)$ is Hermitian then $(f.A)^c= f^c.\bar{A}$.
\end{remark}

Interestingly, neither action is free, not even when reduced to $PSL(2,\hh)$. Indeed, the stabilizer of the identity function with respect to either action of $GL(2,\hh)$ equals
$$\{c\cdot \I\ |\  c \in \hh \setminus \{0\} \},$$
a subgroup of $GL(2,\hh)$ that strictly includes $N$ and is not normal. As a consequence, the set of regular fractional transformations
$$\rft = \{\f_A \ |\  A \in GL(2,\rr)\} = \{\f_A \ |\  A \in SL(2,\rr)\},$$ 
which is the orbit of the identity function $id = \f_{\I}$ under the right action of $GL(2,\hh)$ on $\mathcal{L}_\infty$, does not inherit a group structure from $GL(2,\hh)$.

\begin{lemma}
The set $\rft$ of regular fractional transformations is also the orbit of $id$ with respect to the left action of $GL(2,\hh)$ on $\mathcal{L}_\infty$.
\end{lemma}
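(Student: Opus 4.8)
The plan is to compare the two orbits through the conjugation identity recorded in Remark~\ref{leftright}, which is exactly the bridge between the right and left actions. Write $O_L=\{\,B.id \mid B\in GL(2,\hh)\,\}$ for the orbit of $id$ under the left action. By hypothesis $\rft=\{\f_A\mid A\in GL(2,\hh)\}$ is the right orbit of $id$ and coincides with $\{\f_A\mid A\in GL(2,\rr)\}$. My goal is $O_L=\rft$, and the whole argument will reduce to one structural fact: that $\rft$ is invariant under regular conjugation $f\mapsto f^c$.

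First I would feed $f=id$ into the relation $(f.A)^c=\bar A^{t}.f^c$ of Remark~\ref{leftright}. Since $id(q)=q$ has real coefficients, Definition~\ref{conjugate} gives $id^c=id$, so the relation collapses to $(\f_A)^c=\bar A^{t}.id$. The right-hand side is the left action of $\bar A^{t}$ on $id$, hence an element of $O_L$; and as $A$ ranges over $GL(2,\hh)$ the conjugate transpose $\bar A^{t}$ ranges over all of $GL(2,\hh)$, this map being an involutive bijection of the group. Consequently
\begin{equation*}
O_L=\{\bar A^{t}.id \mid A\in GL(2,\hh)\}=\{(\f_A)^c\mid A\in GL(2,\hh)\}=\{f^c\mid f\in\rft\},
\end{equation*}
the last equality using that $\rft$ is the right orbit $\{\f_A\mid A\in GL(2,\hh)\}$. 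Thus the lemma is equivalent to the claim $\{f^c\mid f\in\rft\}=\rft$.

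The remaining, and genuinely substantive, step is to check that regular conjugation fixes $\rft$; in fact I expect each element to be individually self-conjugate. Writing a representative as $\f_A=(qc+d)^{-*}*(qa+b)$ with $a,b,c,d\in\rr$, both $f(q)=qc+d$ and $g(q)=qa+b$ have real coefficients, so $f^c=f$, $g^c=g$, and the conjugation rule for quotients gives $(\f_A)^c=g^c*(f^c)^{-*}=(qa+b)*(qc+d)^{-*}$. It then remains to identify this \emph{right} regular quotient with the \emph{left} one $\f_A$. This is where the main obstacle lies, since the paper stresses that no general formula relates $g*h^{-*}(q)$ to $g(q)h(q)^{-1}$. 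The rescue is that for real coefficients the situation degenerates favourably: when $q\in L_I$ the value $f^c(q)=f(q)$ lies in $L_I$ and so commutes with $q$, whence $T_f(q)=q$ in Theorem~\ref{quotients}; an analogous computation handles $g$. Restricting everything to a slice $L_I$, where all the values $f(q),g(q)$ lie in the commutative field $L_I$, both quotients reduce to $f(q)^{-1}g(q)$. Hence $(\f_A)^c=\f_A$ on each slice and therefore everywhere, giving $\{f^c\mid f\in\rft\}=\rft$ and, with the previous paragraph, $O_L=\rft$. The only points demanding care are bookkeeping: the convention $\sum q^n a_n$ makes $\alpha*id$ the \emph{right} multiplication $q\mapsto q\alpha$, so transposes must be tracked when matching $B.id$ against $\f_A$, and the coincidence of the two quotients may be invoked only in the real-coefficient case, where the commutativity of each $L_I$ supplies the symmetry otherwise absent between the two actions.
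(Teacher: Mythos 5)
Your opening reduction is correct and in fact inverts the paper's logic nicely: plugging $f=id$ into Remark~\ref{leftright} and using $id^c=id$ gives $(\f_A)^c=\bar A^{t}.id$, and since $A\mapsto\bar A^{t}$ is a bijection of $GL(2,\hh)$, the left orbit of $id$ is exactly $\{f^c\mid f\in\rft\}$. (The paper travels the opposite way: it proves the lemma by direct manipulation and then deduces closure under conjugation as the subsequent Remark.) So your strategy is legitimate, provided you can prove, independently, that $\rft$ is closed under regular conjugation for \emph{quaternionic} matrices.

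That is where the genuine gap lies. In the substantive step you write a representative as $\f_A=(qc+d)^{-*}*(qa+b)$ with $a,b,c,d\in\rr$ — apparently taking at face value the paper's display $\rft=\{\f_A\mid A\in GL(2,\rr)\}$, which is a typo (the surrounding text, and your own first paragraph, correctly treat $\rft$ as the orbit of $id$ under $GL(2,\hh)$; real matrices give only real M\"obius maps, not the regular M\"obius transformations $(1-q\bar a)^{-*}*(q-a)u$). For quaternionic entries the elements are \emph{not} self-conjugate: already $f(q)=qi$ lies in $\rft$ but $f^c(q)=-qi\neq f$, and your slice argument collapses because $f(q)$ need not lie in $L_I$ for $q\in L_I$ when the coefficients are not real. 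What closure under conjugation actually requires is converting the right quotient $(q\bar a+\bar b)*(q\bar c+\bar d)^{-*}$ back into a left quotient $(qc'+d')^{-*}*(qa'+b')$ — and this conversion is precisely the heart of the paper's own proof: after normalizing $c=1$, write $\f_A(q)=[(q-p)^s]^{-1}(q-\bar p)*(q\alpha+\beta)$ and invoke the factor-swapping fact from the zero-structure theory that $(q-\bar p)*(q\alpha+\beta)=(q\gamma+\delta)*(q-\tilde p)$ for some $\tilde p$ in the same sphere $x+y\s$ as $p$, with $(q-p)^s=(q-\tilde p)^s$, yielding $\f_A=(\gamma*q+\delta)*(q-\overline{\tilde p})^{-*}=C.id$ (plus the easy degenerate case $c=0$). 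Since by your own reduction conjugation-closure is \emph{equivalent} to the lemma, proving it only for real coefficients leaves the entire content of the statement unproven; you would need to supply this root-shifting argument (or an equivalent right-to-left quotient conversion), at which point your detour through conjugation becomes an alternative packaging of the same computation rather than a shortcut around it.
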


\begin{proof}
Let $A=\begin{bmatrix}
a & c \\
b & d
\end{bmatrix} \in GL(2,\hh)$, and let us prove that $\f_A = id.A$ can also be expressed as $C.id$ for some $C \in GL(2,\hh)$. If $c=0$ then 
$$\f_A(q) = d^{-1}*(qa+b) = (d^{-1}a)*q+ d^{-1}b,$$ 
else 
$$\f_A(q) = \f_{c^{-1}A}(q) = (q-p)^{-*}*(q\alpha +\beta) = [(q-p)^s]^{-1} (q-\bar p)*(q\alpha + \beta)$$
for some $p, \alpha, \beta \in \hh$. If  $p= x+Iy$ then there exists $\tilde p \in x+y\s$ and $\gamma, \delta \in \hh$ such that $ (q-\bar p)*(q\alpha + \beta) = (q \gamma + \delta)*(q-\tilde p)$; additionally, $(q-p)^s = (q-\tilde p)^s$ (see \cite{zeros} for details). Hence,
$$\f_A(q) = [(q-\tilde p)^s]^{-1} (q \gamma + \delta)*(q-\tilde p) = (q \gamma + \delta) * (q-\overline{\tilde p})^{-*} =  (\gamma* q + \delta) * (q-\overline{\tilde p})^{-*},$$
which is of the desired form.
Similar manipulations prove that for all $C \in GL(2,\hh)$, the function $C.id$ equals $\f_A = id.A$ for some $A \in GL(2,\hh)$.
\end{proof}

We now state an immediate consequence of the previous lemma and of remark \ref{leftright}.

\begin{remark}
The set $\rft$ of regular fractional transformations is preserved by regular conjugation.
\end{remark}


\section{Regular M\"obius transformations of $\B$}\label{regularmoebius}

The regular fractional transformations that map the open quaternionic unit ball $\B$ onto itself, called \emph{regular M\"obius transformations  of $\B$}, are characterized by two results of \cite{moebius}.  
\begin{theorem}
For all $A \in SL(2, \hh)$, the regular fractional transformation $\f_A$ maps $\B$ onto itself if and only if $A \in Sp(1,1)$, if and only if there exist (unique) $u \in \partial \B, a \in \B$ such that
\begin{equation}
\f_A(q) = (1-q \bar a)^{-*}*(q-a)u.
\end{equation}
In particular, the set $\mathfrak{M}= \{f \in \rft \ |\  f(\B) = \B\}$ of the regular M\"obius transformations of $\B$ is the orbit of the identity function under the right action of $Sp(1,1)$.
\end{theorem}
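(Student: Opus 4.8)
The plan is to reduce the statement to the classical characterization of $\m$ recalled in Section~\ref{classicM} by means of Theorem~\ref{quotients}. Writing $f(q)=qc+d$ and $g(q)=qa+b$, that theorem gives
$\f_A(q)=f^{-*}*g(q)=f(T_f(q))^{-1}g(T_f(q))=F_A(T_f(q))$
on $\B\setminus\z_{f^s}$, where $F_A(w)=(wc+d)^{-1}(wa+b)$ is the classical transformation of \eqref{InvM} and $T_f(q)=f^c(q)^{-1}qf^c(q)$. The key observation is that $T_f(q)$ is a conjugate of $q$, hence $|T_f(q)|=|q|$; combined with the last assertion of Theorem~\ref{quotients} (that $T_f$ is a diffeomorphism of $B\setminus\z_{f^s}$ onto itself with inverse $T_{f^c}$), this shows $T_f$ restricts to a modulus-preserving bijection of $\B\setminus\z_{f^s}$. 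Thus $\f_A$ and $F_A$ have the same image on $\B$, up to the nowhere-dense set $\z_{f^s}$.

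First I would establish $\f_A(\B)=\B\iff A\in Sp(1,1)$. If $A\in Sp(1,1)$, then $f^s(q)=|c|^2q^2+2\,\mathrm{Re}(c\bar d)\,q+|d|^2$ has its (conjugate) roots of modulus $|d|/|c|>1$, so $\z_{f^s}\cap\overline{\B}=\varnothing$, $\f_A=F_A\circ T_f$ is defined on all of $\B$, and the previous paragraph together with the classical characterization (which gives $F_A(\B)=\B$) yields $\f_A(\B)=\B$. Conversely, since $T_f$ preserves moduli and maps $\B\setminus\z_{f^s}$ onto itself, the hypothesis $\f_A(\B\setminus\z_{f^s})=\B$ forces $F_A(\B)=\B$ off a nowhere-dense set, whence $F_A(\B)=\B$ and the classical characterization gives $A\in Sp(1,1)$. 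The care needed to show that the removable sets $\z_{f^s}$ do not affect these surjectivity statements is the first point to handle cleanly.

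To obtain the explicit form I would exploit the scalar freedom recorded before the statement: since the stabilizer of $id$ is $\{c\,\I:c\in\hh\setminus\{0\}\}$, one has $\f_{cA}=\f_A$ for every $c\neq0$, i.e. left multiplication of $A$ by a nonzero scalar does not change the transformation. Given $A\in Sp(1,1)$ with entries $a,b,c,d$, the relations $|a|^2-|b|^2=1=|d|^2-|c|^2$ and $\bar a c=\bar b d$ give $d\neq0$ and, taking moduli in the last relation, $|c|=|b|$ and hence $|a|=|d|$. Normalizing by $d^{-1}$, I would set $B=d^{-1}A$, whose lower-right entry is $1$; its first column reads $u:=d^{-1}a$ with $|u|=|a|/|d|=1$, and I would put $a_0:=-\overline{d^{-1}c}$, satisfying $|a_0|=|c|/|d|<1$, so $a_0\in\B$ and $u\in\partial\B$. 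The remaining relation $\bar a c=\bar b d$ is exactly what guarantees the off-diagonal consistency $-a_0u=d^{-1}b$, so that $B=\begin{bmatrix} u & -\bar a_0\\ -a_0u & 1\end{bmatrix}$ and therefore $\f_A(q)=\f_B(q)=(1-q\bar a_0)^{-*}*(q-a_0)u$, the asserted form. (In contrast with the classical case no left factor $v^{-1}$ survives: left multiplication by a non-real constant destroys regularity, so it must be absorbed into $u$ on the right.) For the converse I would verify directly that $\tfrac{1}{\sqrt{1-|a_0|^2}}\begin{bmatrix} u & -\bar a_0\\ -a_0u & 1\end{bmatrix}$ satisfies the three defining relations of $Sp(1,1)$, so that every map of the stated form is $\f_A$ for some $A\in Sp(1,1)$.

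Uniqueness of $(a_0,u)$ follows because, the denominator $(1-q\bar a_0)$ being zero-free on $\B$, the point $a_0$ is the unique zero of $\f_A$ in $\B$, and $u$ is then determined (for instance by matching lowest-order coefficients). Finally the orbit statement is immediate: by the first equivalence $\mathfrak{M}=\{\f_A:A\in SL(2,\hh),\ \f_A(\B)=\B\}=\{\f_A:A\in Sp(1,1)\}$, and since $\f_A=id.A$, this is precisely the orbit of $id$ under the right action of $Sp(1,1)$. I expect the main obstacle to be the domain bookkeeping in the converse half of the first equivalence---ensuring that the zero-set $\z_{f^s}$ and the action of $T_f$ on it do not obstruct passing between $\f_A(\B)=\B$ and $F_A(\B)=\B$---whereas the explicit form, once the normalization $\f_{cA}=\f_A$ is in hand, reduces to the short computation above.
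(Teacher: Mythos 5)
Your proof is correct and takes essentially the same route as the paper's source: the paper states this theorem without proof, citing \cite{moebius}, where it is likewise obtained by writing $\f_A = F_A\circ T_f$ via Theorem \ref{quotients}, using that $T_f$ is a modulus-preserving bijection of the relevant circular sets to reduce everything to the classical $Sp(1,1)$ characterization of Section \ref{classicM}, and then exploiting the scalar freedom $\f_{cA}=\f_A$ to normalize $d=1$ and extract the form $(1-q\bar a)^{-*}*(q-a)u$, with $a$ recovered as the unique zero. The one step you leave schematic---excluding $\z_{f^s}\cap\B\neq\varnothing$ in the converse---closes routinely: under the paper's reading of $\mathfrak{M}$ the condition $f(\B)=\B$ already presupposes $\f_A$ is defined on all of $\B$, and in any case $F_A$ is an open injection of $\mathbb{HP}^1$, so $F_A(\B)$ would be an open set equal to $\B$ united with a nonempty subset of $\partial\B$, which is impossible.
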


\begin{theorem}
The class of regular bijective functions $f : \B \to \B$ coincides with the class $\mathfrak{M}$ of regular M\"obius transformations of $\B$.
\end{theorem}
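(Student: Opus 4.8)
I would prove the two inclusions separately. For $\mathfrak{M}\subseteq\{\text{regular bijections of }\B\}$, note that each $M\in\mathfrak{M}$ is regular and maps $\B$ onto $\B$ by the preceding theorem, so only injectivity is at stake. Writing $M(q)=(1-q\bar a)^{-*}*(q-a)u$ and setting $\phi(q)=1-q\bar a$, Theorem~\ref{quotients} gives $M(q)=\phi(T_\phi(q))^{-1}(T_\phi(q)-a)u=(1-T_\phi(q)\bar a)^{-1}(T_\phi(q)-a)u$, that is, $M=\mu\circ T_\phi$, where $\mu(w)=(1-w\bar a)^{-1}(w-a)u$ is a classical M\"obius transformation of $\B$ (the characterization of Section~\ref{classicM}, with $v=1$, $q_0=a$). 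Since $a\in\B$, the symmetrization $\phi^s$ has all its zeros outside $\bar\B$ by Theorem~\ref{conjugatezeros}, so $T_\phi$ is a diffeomorphism of $\B$ onto itself by Theorem~\ref{quotients}; as $\mu$ is a bijection of $\B$, so is $M$. This settles the easy inclusion.

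For the reverse inclusion I would imitate the proof that every disk automorphism of $\cc$ is a M\"obius map: normalize the fixed point to the origin and invoke a rigidity statement. The normalization must remain inside the regular category, where ordinary composition fails, so I would use the right action of $Sp(1,1)$. The key point is that both the value at $0$ and the global structure of $f.A$ are computable through Theorem~\ref{quotients}: writing $\psi=fc+d$ and $\chi=fa+b$, one gets $f.A=\psi(T_\psi)^{-1}\chi(T_\psi)=F_A\circ f\circ T_\psi$, the classical transformation $F_A$ of \eqref{InvM} applied to $f\circ T_\psi$. Since a regular bijection satisfies $f(\B)\subseteq\B$, for $A\in Sp(1,1)$ the denominator $\psi$ has no zeros in $\B$, so $\z_{\psi^s}\cap\B=\emptyset$, $T_\psi$ is a diffeomorphism of $\B$, and $F_A$ is a classical M\"obius transformation of $\B$; hence $f.A$ is again a regular bijection of $\B$. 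Evaluating at $q=0$, where the $*$-operations reduce to pointwise ones and $T_\psi(0)=0$, gives $(f.A)(0)=F_A(f(0))$. Choosing $A\in Sp(1,1)$ with $F_A(f(0))=0$ — possible since classical M\"obius maps act transitively on $\B$ — produces a regular self-bijection $g:=f.A$ of $\B$ fixing $0$. If I can show that such a $g$ is a rotation $q\mapsto qu$, then $g\in\mathfrak{M}$, and $f=g.A^{-1}$ lies in the $Sp(1,1)$-orbit of $g$, hence in $\mathfrak{M}$.

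It remains to prove the rigidity: a regular self-bijection $g$ of $\B$ with $g(0)=0$ is a rotation. Writing $g(q)=\sum_n q^n a_n$ with $a_0=0$, the Schwarz lemma for regular functions gives $|a_1|\le1$, with equality forcing $g(q)=qa_1$ and $|a_1|=1$. The main obstacle is precisely to force $|a_1|=1$. In the complex case one applies Schwarz to $g^{-1}$ as well, but here the inverse of an injective regular function need not be regular — this is exactly where the quaternionic argument cannot copy the classical one. I would instead exploit injectivity and surjectivity directly: $g$ is a diffeomorphism of $\B$ whose real differential at $0$ is right multiplication by $a_1$, of Jacobian $|a_1|^4$, while the inverse diffeomorphism $g^{-1}$ has differential at $0$ equal to right multiplication by $a_1^{-1}$. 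I would then bound $|a_1|$ from below by a volume (change-of-variables) comparison for the bijection $g\colon\B\to\B$, or through the regular Schwarz–Pick estimate announced later, whose equality case singles out $\mathfrak{M}$; combined with $|a_1|\le1$ this yields $|a_1|=1$ and hence, by the equality case of the Schwarz lemma, $g(q)=qa_1$. Turning this lower bound into a rigorous argument \emph{without} a regular inverse is the delicate step I expect to demand the most care.
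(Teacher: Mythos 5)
The paper itself states this theorem without proof, quoting it from \cite{moebius}, so there is no internal argument to compare against; judged on its own terms, your proposal is half a proof. The inclusion $\mathfrak{M}\subseteq\{\text{regular bijections of }\B\}$ is correct and well executed: the factorization $M=\mu\circ T_\phi$ is exactly the mechanism the paper exploits in section \ref{differential} (the remark $\mr_{q_0}=M_{q_0}\circ T$), and your identity $f.A=F_A\circ f\circ T_\psi$, together with $(f.A)(0)=F_A(f(0))$ and the non-vanishing of $\psi^s$ on $\B$ when $A\in Sp(1,1)$ and $f(\B)\subseteq\B$, is a correct application of Theorems \ref{quotients} and \ref{conjugatezeros}. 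The reduction of the hard inclusion to the rigidity statement (a regular self-bijection $g$ of $\B$ with $g(0)=0$ must be $q\mapsto qu$ with $u\in\partial\B$) is likewise legitimate.

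But that rigidity statement is the actual content of the theorem, and none of the devices you sketch closes it. First, you assert that $g$ is a diffeomorphism: a smooth bijection need not be one (think of $x\mapsto x^3$ on $\rr$), and even $a_1=\partial_c g(0)\neq 0$ needs an argument --- nothing quoted in the paper rules out an injective regular function whose Cullen derivative vanishes at a point, and without $a_1\neq0$ your statement about the differential of $g^{-1}$ at $0$ is vacuous. Second, the volume comparison points in the wrong direction: the change of variables gives $\int_\B J_g\,dV=\mathrm{vol}(\B)$, an averaged identity, and you have no sub-mean-value principle converting it into the pointwise lower bound $J_g(0)=|a_1|^4\geq 1$; already in $\cc$, the identity $\int_{\D}|g'|^2\,dA=\pi$ for an automorphism does not by itself force $|g'(0)|=1$ --- subharmonicity of $|g'|^2$ reproduces the \emph{upper} bound, which you already have from the Schwarz lemma. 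Third, invoking the equality case of Theorem \ref{MainSchwarzPick} begs the question: that statement only says the inequalities are strict unless $f\in\mathfrak{M}$, so to apply it you must first exhibit a point of equality for your bijection $g$, which is precisely the missing content; moreover, since the results of \cite{schwarzpick} postdate and build on the characterization of regular bijections from \cite{moebius}, you would at minimum owe a circularity check before leaning on them. In short: the easy inclusion and the $Sp(1,1)$-normalization are sound, but forcing $|a_1|=1$ without a regular inverse --- the step you explicitly defer --- is the theorem, and the deferral is not repaired by any of the three suggested routes.
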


As a consequence, the right action of $Sp(1,1)$ preserves the class of regular bijective functions from $\B$ onto itself.
We now study, more in general, the effect of the actions of $Sp(1,1)$ on the class
$$\reg = \{f:\B \to \B\ |\ f \mathrm{\ is\ regular}\}.$$

\begin{proposition}
If $f \in \reg$ then for all $a \in \B$
\begin{equation}
(1-f \bar a)^{-*}*(f-a) =(f-a)*(1-\bar a*f)^{-*}.
\end{equation}
Moreover, the left and right actions of $Sp(1,1)$ preserve $\reg$.
\end{proposition}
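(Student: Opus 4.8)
The plan is to establish the three claims in turn. The pointwise identity is immediate once one recognizes its two sides as the right and left actions of a single Hermitian matrix. Set $A_a=\begin{bmatrix} 1 & -\bar a \\ -a & 1\end{bmatrix}$; this matrix is Hermitian, and for $a\in\B$ it is invertible, since the Schur complement of its upper-left entry is $1-a\bar a=1-|a|^2>0$. The left-hand side $(1-f\bar a)^{-*}*(f-a)$ is precisely $f.A_a$, while the right-hand side $(f-a)*(1-\bar a*f)^{-*}$ is $A_a^{t}.f$; thus the asserted identity is exactly the equality $f.A_a=A_a^{t}.f$ furnished by Proposition~\ref{hermitian} applied to $A_a$.

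For the right action, fix $f\in\reg$ and $A=\begin{bmatrix} a & c \\ b & d\end{bmatrix}\in Sp(1,1)$, and put $h=fc+d$, $g=fa+b$, so that $f.A=h^{-*}*g$. First I would observe that $h$ is zero-free on $\B$: a zero of $h$ would force $f$ to attain the pole $-dc^{-1}$ of the classical M\"obius transformation attached to $A$, a point lying outside $\overline\B$ (the classical transformation maps $\overline\B$ homeomorphically onto itself), whereas $f(\B)\subseteq\B$; the case $c=0$ is trivial. By Theorem~\ref{conjugatezeros} this gives $\z_{h^s}\cap\B=\emptyset$, so $f.A$ is defined on all of $\B$ and, by Theorem~\ref{quotients}, $T_h$ maps $\B$ into $\B$ and $f.A(q)=h(T_h(q))^{-1}g(T_h(q))=(f(w)c+d)^{-1}(f(w)a+b)$ with $w=T_h(q)\in\B$. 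Since $f(w)\in\B$, the right-hand side is the value at $f(w)$ of the classical M\"obius transformation attached to $A\in Sp(1,1)$, which by the characterization recalled in Section~\ref{classicM} maps $\B$ onto $\B$; hence $f.A(q)\in\B$. As a left regular quotient $f.A$ is regular, so $f.A\in\reg$.

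The left action is the delicate point, and here the method above breaks down: no formula is known expressing the value of a right regular quotient $g*h^{-*}$ through $g$ and $h$, so Theorem~\ref{quotients} does not apply to $(a*f+b)*(c*f+d)^{-*}$ directly. The plan is to reduce to generators through the Cartan decomposition $Sp(1,1)=(Sp(1)\times Sp(1))\cdot P$, in which $Sp(1)\times Sp(1)$ is the subgroup of block-diagonal unitaries $U=\begin{bmatrix} u & 0 \\ 0 & v\end{bmatrix}$ with $|u|=|v|=1$, and $P$ denotes the positive Hermitian elements of $Sp(1,1)$. Because $A\mapsto{}^{t}A.(\,\cdot\,)$ is a group action, it suffices to check that the left action of each factor preserves $\reg$. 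For a Hermitian factor this is already done: Proposition~\ref{hermitian} identifies its left action with its right action, which preserves $\reg$ by the previous paragraph. For a unitary factor one computes directly, via the product formula $u*f(q)=u\,f(u^{-1}qu)$ of Theorem~\ref{quotients}, that ${}^{t}U.f(q)=u\,f(u^{-1}qu)\,v^{-1}$, whose modulus equals $|f(u^{-1}qu)|<1$ because conjugation preserves both $\B$ and the quaternionic modulus. Composing the two types of factors then shows that the left action of every element of $Sp(1,1)$ preserves $\reg$, which is the crux that this argument is designed to overcome.
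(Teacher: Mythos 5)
Your proposal is correct, and while it disposes of the displayed identity exactly as the paper does (both of you apply Proposition~\ref{hermitian} to the Hermitian matrix $\left[\begin{smallmatrix}1 & -\bar a\\ -a & 1\end{smallmatrix}\right]$), it reaches the second statement by a genuinely different route. The paper never argues with a general matrix: it writes the right-translated function in the normal form $\tilde f=v^{-1}*(1-f\bar a)^{-*}*(f-a)u$, checks that $h=v-f\bar av$ is zero-free, and uses Theorem~\ref{quotients} to obtain the pointwise factorization $\tilde f=v^{-1}(M_a\circ f\circ T_h)u$; the left action is then dispatched in one line, by using the displayed identity to convert the left normal form $u*(f-a)*(1-\bar a*f)^{-*}*v^{-*}$ into a right-action expression. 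You instead treat an arbitrary $A\in Sp(1,1)$ directly, replacing the normal form by the pole argument --- which, incidentally, you can justify without invoking a boundary extension that the characterization in section~\ref{classicM} does not literally state: the defining relations of $Sp(1,1)$ give $|d|^2=|c|^2+1$, so $|dc^{-1}|>1$ outright --- and for the left action you substitute the polar (Cartan) decomposition for the paper's identity. Note that the two reductions of left to right hinge on the same key lemma, Proposition~\ref{hermitian}, and are in fact closer than they look: the positive Hermitian elements of $Sp(1,1)$ have equal real diagonal entries, hence are exactly the positive real multiples of the matrices $\left[\begin{smallmatrix}1 & -\bar a\\ -a & 1\end{smallmatrix}\right]$ with $a\in\B$, so your decomposition $A=UP$ is the paper's normal form $A=A_a\cdot\mathrm{diag}(u,v)$ in polar disguise. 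What your route buys is independence from the normal-form classification; what it costs is the unproved (though standard) fact that the polar factors of $A\in Sp(1,1)$ stay in the group --- true because $Sp(1,1)$ is closed under conjugate transposition and its unitary elements commute with $H$, hence are diagonal --- plus some care with the notation ${}^tA.f$: $Sp(1,1)$ is \emph{not} closed under transposition (for instance $a=\sqrt2\,j$, $c=1$, $b=i$, $d=-\sqrt2\,k$ gives an element whose transpose violates $\bar ac=\bar bd$), and your argument survives this ambiguity only because diagonal unitaries and positive Hermitian matrices behave well under transposition and conjugation --- a point worth one explicit sentence in a final write-up.
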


\begin{proof}
The fact that $(1-f\bar a)^{-*}*(f-a)=(f-a)*(1-\bar a*f)^{-*}$ is a consequence of proposition \ref{hermitian}.

Let us turn to the second statement, proving that for all $a \in \B, u,v \in \partial \B$, the function 
$$\tilde f =v^{-1}*(1-f\bar a)^{-*}*(f-a) u = (v-f\bar av)^{-*}*(f-a) u$$ 
is in $\reg$. The fact that for all $a \in \B, u,v \in \partial \B$ the function $u*(f-a)*(1-\bar a*f)^{-*} *v^{-*}$ belongs to $\reg$ will then follow from the equality just proven.

The function $\tilde f$ is regular in $\B$ since $h=v- f \bar av$ has no zero in $\B$ (as a consequence of the fact that $|a|<1$, $|f|<1$ and $|v|=1$). Furthermore,
$$\tilde f = (v- (f \circ T_h) \bar av)^{-1}(f\circ T_h-a)u=$$
$$v^{-1}(1- (f \circ T_h) \bar a)^{-1}(f\circ T_h-a)u =v^{-1}(M_a \circ f \circ T_h)u$$
where $T_h$ and $M_a(q) = (1-q \bar a)^{-1}(q-a)$ map $\B$ to itself bijectively and $u,v \in \partial\B$.
Hence, $\tilde f=v^{-*}*(1-f\bar a)^{-*}*(f-a) *u \in \reg$, as desired.
\end{proof}

As a byproduct, we obtain that the orbit of the identity function under the left action of $Sp(1,1)$ equals $\mathfrak{M}$.

\begin{proposition}
If $f\in \reg$ then its regular conjugate $f^c$ belongs to $\reg$ as well. Furthermore, $f^c$ is bijective (hence an element of $\mathfrak{M}$) if and only if $f$ is.
\end{proposition}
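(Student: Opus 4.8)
The plan is to split the statement into two parts: that $f^c$ is a well-defined element of $\reg$, and the equivalence of bijectivity. Regularity of $f^c$ is immediate, since if $f(q)=\sum_n q^n a_n$ converges on $\B$ then $f^c(q)=\sum_n q^n \bar a_n$ has coefficients of the same moduli, hence the same radius of convergence, and is therefore regular on $\B$. The entire content of the first assertion is thus the inclusion $f^c(\B)\subseteq\B$, i.e. the bound $|f^c(q)|<1$ for all $q\in\B$. I would establish this by comparing $f$ and $f^c$ sphere by sphere: since $\B$ is a union of the $2$-spheres $x+y\s$, it suffices to show that $|f|$ and $|f^c|$ take exactly the same set of values on each such sphere contained in $\B$.

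To carry this out, fix $x,y\in\rr$ with $x+y\s\subseteq\B$ and write $(x+Iy)^n=\alpha_n+I\beta_n$ with $\alpha_n,\beta_n\in\rr$. Setting $P=\sum_n\alpha_n a_n$ and $Q=\sum_n\beta_n a_n$, a direct computation on the power series gives $f(x+Jy)=P+JQ$ for all $J\in\s$; since conjugating the coefficients replaces each $a_n$ by $\bar a_n$ while leaving the real numbers $\alpha_n,\beta_n$ unchanged, it likewise gives $f^c(x+Jy)=\bar P+J\bar Q$. Expanding squared moduli yields
\[
|f(x+Jy)|^2=|P|^2+|Q|^2-2\,\mathrm{Re}(P\bar Q\,J),\qquad |f^c(x+Jy)|^2=|P|^2+|Q|^2-2\,\mathrm{Re}(\bar P Q\,J).
\]
As $J$ runs over $\s$, the term $\mathrm{Re}(w\,J)$ sweeps the interval $[-|\mathrm{Vec}(w)|,|\mathrm{Vec}(w)|]$, where $\mathrm{Vec}$ denotes the imaginary part. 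Now $|P\bar Q|=|\bar P Q|=|P|\,|Q|$, and $\mathrm{Re}(\bar P Q)=\mathrm{Re}(Q\bar P)=\mathrm{Re}(P\bar Q)$ (the first equality by $\mathrm{Re}(xz)=\mathrm{Re}(zx)$, the second since $Q\bar P=\overline{P\bar Q}$), so $|\mathrm{Vec}(P\bar Q)|=|\mathrm{Vec}(\bar P Q)|$; hence the two squared moduli sweep the same interval. In particular $\max_{J}|f^c(x+Jy)|=\max_{J}|f(x+Jy)|$, and since $f(\B)\subseteq\B$ forces this maximum to be $<1$, we conclude $|f^c|<1$ throughout $\B$, so $f^c\in\reg$.

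For the second assertion I would invoke the group-action machinery. By the characterization of regular bijections of $\B$, the map $f$ is bijective if and only if $f\in\mathfrak{M}$, and $\mathfrak{M}$ is simultaneously the right and the left orbit of $id$ under $Sp(1,1)$. Assume $f\in\mathfrak{M}$ and write $f=id.A$ with $A\in Sp(1,1)$. Since $id^c=id$, Remark \ref{leftright} gives $f^c=(id.A)^c=\bar A^t.id$. It then remains to check that $\bar A^t\in Sp(1,1)$: starting from $\bar A^t H A=H$ and inverting both sides one obtains $A^{-1}H(\bar A^t)^{-1}=H$, that is $A\,H\,\bar A^t=H$, which is exactly the defining relation for $\bar A^t$. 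Hence $f^c=\bar A^t.id$ lies in the left orbit of $id$ under $Sp(1,1)$, namely $\mathfrak{M}$, so $f^c$ is bijective. The converse follows by applying the same implication to $f^c$ and using $(f^c)^c=f$.

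The main obstacle is the first assertion: unlike bijectivity, the inclusion $f^c(\B)\subseteq\B$ does not drop out of the algebraic formalism and genuinely requires the sphere-by-sphere modulus comparison. The delicate point is that non-commutativity rules out a naive pointwise identity $|f^c(q)|=|f(q)|$; what survives is only the equality of the two value sets on each sphere $x+y\s$, resting on $|\mathrm{Vec}(P\bar Q)|=|\mathrm{Vec}(\bar P Q)|$. The second part, by contrast, is essentially bookkeeping once Remark \ref{leftright} and the stability of $Sp(1,1)$ under $A\mapsto\bar A^t$ are in hand.
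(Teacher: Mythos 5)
Your proof is correct, but it takes a genuinely different route from the paper's on both halves. The paper handles everything with one tool, Theorem \ref{conjugatezeros}: if $f^c(p)=a$ with $p\in x+y\s$, then since $(f^c-a)^c=f-\bar a$, the function $f-\bar a$ must vanish somewhere on the same sphere; taking $|a|\geq 1$ gives $f^c(\B)\subseteq\B$ by contradiction, and the same sphere-by-sphere count of solutions of $f^c(p)=a$ versus $f(\tilde p)=\bar a$ gives the bijectivity equivalence directly — no classification theorem needed. Your first half re-derives by hand, via the splitting $f(x+Jy)=P+JQ$, $f^c(x+Jy)=\bar P+J\bar Q$ and the identity $|\mathrm{Vec}(P\bar Q)|=|\mathrm{Vec}(\bar P Q)|$, precisely the value-transfer phenomenon on spheres that underlies Theorem \ref{conjugatezeros}; this is more self-contained and even yields the sharper quantitative fact that $|f|$ and $|f^c|$ have identical value sets on each sphere $x+y\s$, but it duplicates a cited result that you could have invoked in two lines. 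Your second half is a genuinely different, algebraic argument: writing $f=id.A$ with $A\in Sp(1,1)$, applying Remark \ref{leftright} together with $id^c=id$ to get $f^c=\bar A^t.id$, and verifying that $Sp(1,1)$ is closed under $A\mapsto \bar A^t$ (your inversion computation is sound, using $H^{-1}=H$ and $\overline{(\bar A^t)}^{\,t}=A$), then citing that the left orbit of $id$ under $Sp(1,1)$ is $\mathfrak{M}$; the closing step via $(f^c)^c=f$ is also fine. What your route buys is an explicit formula exhibiting $f^c$ as a regular M\"obius transformation; what it costs is reliance on the deep theorem that regular bijections of $\B$ coincide with $\mathfrak{M}$, which the paper's elementary counting argument avoids entirely.
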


\begin{proof}
Suppose $f^c(p) = a \in \hh \setminus \B$ for some $p = x+Iy \in \B$. Then $p$ is a zero of the regular function $f^c-a$. By theorem \ref{conjugatezeros}, there exists $\tilde p \in x+y\s \subset \B$ such that $(f^c-a)^c = f-\bar a$ vanishes at $\tilde p$. Hence, $f(\B)$ includes $\bar a \in \hh \setminus\B$, a contradiction with the hypothesis $f(\B) \subseteq \B$.

As for the second statement, $f^c$ is bijective if and only if, for all $a \in \B$, there exists a unique $p \in \B$ such that $f^c(p) =a$. Reasoning as above, we conclude that this happens if and only if for all $a \in \B$, there exists a unique $\tilde p \in \B$ such that $f(\tilde p) = \bar a$. This is equivalent to the bijectivity of $f$.
\end{proof}


\section{Differential and metric properties of regular M\"obius transformations}\label{differential}

The present section is concerned with two natural questions:  
\begin{enumerate}
\item[(a)] whether the regular M\"obius transformations are conformal (as the classical M\"obius transformations); 
\item[(b)] whether they preserve the quaternionic Poincar\'e distance defined on $\B$ by formula (\ref{poincaredist}).
\end{enumerate}
For a complete description of the Poincar\'e metric, see \cite{poincare}. In order to answer question (a), we will compute for a regular M\"obius transformation the series development introduced by the following result of \cite{expansion}. Let us set
$$U(x_0+y_0\s,r) = \{q \in \hh \ |\  |(q-x_0)^2+y_0^2| < r^2\}$$
for all $x_0,y_0 \in \rr$, $r>0$.

\begin{theorem}
Let $f$ be a regular function on $\Omega=B(0,R)$, and let  $U(x_0+y_0\s,r) \subseteq \Omega$. Then for each $q_0 \in x_0+y_0 \s$ there exists $\{A_n\}_{n \in \nn} \subset \hh$ such that
\begin{equation}\label{expansion}
f(q) = \sum_{n \in \nn} [(q-x_0)^2+y_0^2]^n [A_{2n}+(q-q_0) A_{2n+1}]
\end{equation}
for all $q \in U(x_0+y_0\s,r)$. As a consequence,
\begin{equation}
\lim_{t\to 0} \frac{f(q_0+tv)-f(q_0)}{t} = v A_1 + (q_0v - v\bar q_0)A_2.
\end{equation}
for all $v \in T_{q_0}\Omega\cong \hh$.
\end{theorem}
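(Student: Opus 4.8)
The plan is to prove the existence of the expansion (\ref{expansion}) by reducing to a single slice and then extending by the representation formula, and to obtain the derivative formula by a first-order computation once the expansion is in hand. Throughout I write $\Delta(q) = (q-x_0)^2+y_0^2$, a polynomial with real coefficients whose zero set is exactly the sphere $x_0+y_0\s$ and which satisfies $|\Delta(q)|<r^2$ precisely on $U(x_0+y_0\s,r)$.

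First I would fix $I \in \s$ with $q_0 = x_0 + y_0 I$ and study the restriction $f_I$ on the slice $L_I \cong \cc$. Identifying $q \leftrightarrow z$ and $q_0 \leftrightarrow z_0 = x_0 + y_0 I$, one has $\Delta(q) = (z-z_0)(z-\overline{z_0})$, so $U(x_0+y_0\s,r) \cap L_I = \{z : |(z-z_0)(z-\overline{z_0})|<r^2\}$ is a region invariant under the involution $\sigma(z) = 2x_0 - z$ that exchanges the two roots of $\Delta(z) = w$. Since $f_I$ is holomorphic there, I would split it into its $\sigma$-even and $\sigma$-odd parts $f^\pm(z) = \tfrac12[f_I(z) \pm f_I(\sigma(z))]$. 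The map $z \mapsto w = \Delta(z)$ realizes $U \cap L_I$ as a double cover of the disc $|w|<r^2$ whose deck transformation is $\sigma$; hence the invariant part descends to a holomorphic function of $w$, giving $f^+ = \tilde g(\Delta(q))$, while the anti-invariant $f^-$ vanishes at the fixed point $z=x_0$, so $f^-(z)/(z-x_0)$ is $\sigma$-invariant and descends to a holomorphic $\tilde h$, giving $f^-(z) = (z-x_0)\,\tilde h(\Delta(q))$ (the branch value at $z=x_0$ causing no trouble). Expanding $\tilde g,\tilde h$ in powers of $w=\Delta$ and using $(z-x_0)=(z-z_0)+y_0 I$ to absorb the constant part of the odd term into the even coefficients yields $f_I(z)=\sum_n \Delta^n[A_{2n}+(q-q_0)A_{2n+1}]$, convergent for $|\Delta|<r^2$, i.e. on all of $U \cap L_I$.

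Next I would promote this to all of $U = U(x_0+y_0\s,r)$. Each term $\Delta(q)^n[A_{2n}+(q-q_0)A_{2n+1}]$ is regular in $q$, since $\Delta^n$ has real coefficients, hence is slice-preserving and central for $*$, and multiplication by $(q-q_0)$ and by the constants $A_n$ keeps it a genuine power series in $q$. The slice estimates give $\sum_n |A_{2n}|\rho^n$ and $\sum_n |A_{2n+1}|\rho^n$ convergent for $\rho<r^2$, so the series converges uniformly on compact subsets of $U$ and defines a regular function $\tilde f$ there. By construction $\tilde f$ and $f$ agree on $U\cap L_I$; since $U$ is a connected, axially symmetric domain, the representation formula reconstructs the value of any regular function at $x+Jy$ from its two values $f(x\pm Iy)$ on $L_I$, whence $\tilde f \equiv f$ on $U$ and (\ref{expansion}) follows. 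I expect this extension step — reconciling the one-slice construction with the global symmetry, while $U\cap L_I$ may be disconnected and $\Delta$ may carry its branch value inside the disc — to be the main obstacle.

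Finally, the derivative formula follows by evaluating (\ref{expansion}) along $q = q_0 + tv$ with $t\in\rr$, $v\in\hh$, to first order. Since $q_0 = x_0+y_0 I$, one has $q-q_0 = tv$ and $\Delta(q_0+tv) = (y_0 I + tv)^2 + y_0^2 = t\,y_0(Iv+vI)+O(t^2)$, so only the indices $n=0$ and $n=1$ contribute to order $t$: the $n=0$ term gives $tvA_1$ and the $\Delta A_2$ part of the $n=1$ term gives $t\,y_0(Iv+vI)A_2$, all remaining contributions being $O(t^2)$. Dividing by $t$, letting $t\to 0$, and noting $q_0 v - v\overline{q_0} = (x_0+y_0 I)v - v(x_0 - y_0 I) = y_0(Iv+vI)$, yields $vA_1 + (q_0 v - v\overline{q_0})A_2$, as claimed.
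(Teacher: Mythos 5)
Your proof is correct, but it follows a genuinely different route from the one behind the paper's statement. Note that the paper does not actually prove this theorem: it quotes it from \cite{expansion}, and the method of the original proof is visible in the text immediately after the statement, where the coefficients are identified as $A_{2n} = (R_{\bar q_0}R_{q_0})^nf(q_0)$ and $A_{2n+1}= R_{q_0}(R_{\bar q_0}R_{q_0})^nf(\bar q_0)$. That approach is algebraic and iterative: one repeatedly performs regular division, writing $f(q) = f(q_0) + (q-q_0)*R_{q_0}f(q)$ and then dividing the remainder at $\bar q_0$, exploiting the key identity $(q-q_0)*(q-\bar q_0) = (q-x_0)^2+y_0^2$, a polynomial with real (hence central) coefficients; convergence on $U(x_0+y_0\s,r)$ is then obtained by estimating these remainder coefficients. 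You instead work on one slice, split $f_I$ into even and odd parts under the involution $\sigma(z)=2x_0-z$, descend through the double cover $z \mapsto \Delta(z)$ of the disc $\{|w|<r^2\}$ (branched when $r>y_0$, unbranched with two swapped sheets when $r\leq y_0$, matching the disconnected Cassini region), and globalize via the representation formula. Your route buys existence and convergence essentially for free from one-variable theory, since the $A_n$ inherit the root test from the Taylor coefficients of $\tilde g,\tilde h$; the iterative route buys the explicit coefficient formulas --- which this paper actually needs later, to compute the expansion of $\mr_{q_0}$ --- together with uniqueness. Two points in your argument deserve to be made precise, though neither is a fatal gap: (i) $f_I$ is $\hh$-valued, so the even/odd descent must be run componentwise after the splitting lemma $f_I = F_1+F_2J$ with $J\perp I$, recombining into right coefficients (harmless, since $\Delta(z)^n \in L_I$); (ii) when $r\leq y_0$ the circular set $U$ does not meet $\rr$, and on such domains slice-wise holomorphy alone does \emph{not} imply the representation formula --- you may legitimately apply it to $f$ because $f$ is regular on the whole ball $B(0,R)\supseteq U$, and to your sum $\tilde f$ because it is a locally uniform limit of polynomials, each of which satisfies the formula on all of $\hh$. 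With these remarks in place, your extension step works, and your first-order computation of the differential, including the identity $q_0v - v\bar q_0 = y_0(Iv+vI)$ and the geometric domination of the tail $\sum_{n\geq 2}$, is exactly right.
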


If $q_0 \in L_I$ and if we split the tangent space $T_{q_0}\Omega\cong \hh$ as $\hh = L_I \oplus L_I^\perp$, then the differential of $f$ at $q_0$ acts by right multiplication by $A_1$ on $L_I^\perp$ and by right multiplication by $A_1+2Im(q_0)A_2$ on $L_I$.
Furthermore, if for all $q_0 \in \Omega$ the \emph{remainder} $R_{q_0}f$ is defined as
$$R_{q_0}f(q) = (q-q_0)^{-*}*(f(q)-f(q_0))$$
then the coefficients of \eqref{expansion} are computed as $A_{2n} = (R_{\bar q_0}R_{q_0})^nf(q_0)$ and $A_{2n+1}= R_{q_0}(R_{\bar q_0}R_{q_0})^nf(\bar q_0)$.

Let us recall the definition of the \emph{Cullen derivative} $\partial_cf$, given in \cite{advances} as 
\begin{equation}\label{cullen}
\partial_cf(x+Iy)=\frac{1}{2}\left(\frac{\partial}{\partial x}-I\frac{\partial}{\partial y}\right)f(x+Iy)
\end{equation}
for $I \in \s,\ x,y \in \rr$, as well as the definition of the \emph{spherical derivative} 
\begin{equation}\label{spherical}
\partial_sf(q) = (2Im(q))^{-1}(f(q)-f(\bar q))
\end{equation}
given in \cite{perotti}. We can make the following observation.

\begin{remark}
If $f$ is a regular function on $B(0,R)$ and if \eqref{expansion} holds then $\partial_cf(q_0) = R_{q_0}f(q_0) = A_1+2Im(q_0)A_2$ and $\partial_sf(q_0)= R_{q_0}f(\bar q_0) = A_1$.
\end{remark}

In the case of the regular M\"obius transformation
$$\mr_{q_0}(q) = (1-q\bar q_0)^{-*}*(q-q_0) = (q-q_0)*(1-q\bar q_0)^{-*},$$
clearly $R_{q_0}\mr_{q_0}(q) = (1-q\bar q_0)^{-*}$, so that we easily compute the coefficients $A_n$.

\begin{proposition}
Let $q_0 = x_0+y_0I \in \B$. Then the expansion \eqref{expansion} of $\mr_{q_0}$ at $q_0$ has coefficients
\begin{eqnarray}
A_{2n-1} &=& \frac{\bar q_0^{2n-2}}{(1-|q_0|^2)^{n-1}(1-\bar q_0^2)^{n}}\\
A_{2n} &=& \frac{\bar q_0^{2n-1}}{(1-|q_0|^2)^n(1-\bar q_0^2)^n}
\end{eqnarray}
for all $n\geq 1$. As a consequence, for all $v \in \hh$,
\begin{equation}
\frac{\partial\mr_{q_0}} {\partial v}(q_0)=v (1-\bar q_0^2)^{-1}+(q_0 v-v \bar q_0) \frac{\bar q_0}{(1-|q_0|^2)(1-\bar q_0^2)}.
\end{equation}
\end{proposition}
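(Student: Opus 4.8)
The plan is to push every computation onto the complex slice $L_I = \rr + I\rr$ determined by $q_0 = x_0 + y_0 I$. The key structural observation is that
\[
\phi := R_{q_0}\mr_{q_0} = (1-q\bar q_0)^{-*} = \sum_{k \in \nn} q^k \bar q_0^k
\]
(the first equality being the one already noted in the text, since $\mr_{q_0}=(q-q_0)*\phi$ and $\mr_{q_0}(q_0)=0$) has all its coefficients in $L_I$, because $\bar q_0 \in L_I$. The same then holds for $\mr_{q_0}$ and for every function obtained from these by the remainder operators $R_{q_0}, R_{\bar q_0}$: indeed $(q-c)^{-*}$ has coefficients in $L_I$ whenever $c \in L_I$, quaternionic conjugation maps $L_I$ to itself, and the evaluations entering the definition of $R$ are taken at $q_0,\bar q_0 \in L_I$. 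Since a regular function with coefficients in $L_I$ is determined by its restriction to $L_I$, and since on $L_I$ (a copy of $\cc$) the $*$-product restricts to the ordinary product, every step below collapses to elementary algebra in the commutative field $L_I$, where $R_{q_0}g$ is just the difference quotient $\bigl(g(q)-g(q_0)\bigr)/(q-q_0)$ and scalars of $L_I$ commute through $R_{q_0}$ and $R_{\bar q_0}$.

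The computational heart is to check, by a single difference-quotient computation on $L_I$, that the remainder operators act on $\phi$ simply as multiplication by constants of $L_I$:
\[
R_{q_0}\phi = \frac{\bar q_0}{1-|q_0|^2}\,\phi, \qquad R_{\bar q_0}\phi = \frac{\bar q_0}{1-\bar q_0^2}\,\phi.
\]
Both follow from $\phi(q_0)=(1-|q_0|^2)^{-1}$, $\phi(\bar q_0)=(1-\bar q_0^2)^{-1}$ and the elementary factorization $q\bar q_0 - |q_0|^2 = \bar q_0\,(q-q_0)$ valid in $L_I$. As the prefactors are $L_I$-scalars, these relations survive repeated applications of $R_{q_0}, R_{\bar q_0}$, so I may iterate freely. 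Counting, in $(R_{\bar q_0}R_{q_0})^n$ applied to $\mr_{q_0}$, how many times each operator hits $\phi=R_{q_0}\mr_{q_0}$, I would obtain for $n \geq 1$
\[
(R_{\bar q_0}R_{q_0})^n\mr_{q_0} = \left(\frac{\bar q_0}{1-\bar q_0^2}\right)^{\!n}\left(\frac{\bar q_0}{1-|q_0|^2}\right)^{\!n-1}\phi,
\]
and a further $R_{q_0}$ multiplies this by $\bar q_0(1-|q_0|^2)^{-1}$.

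It then suffices to feed these closed forms into the coefficient formulas $A_{2n} = (R_{\bar q_0}R_{q_0})^n\mr_{q_0}(q_0)$ and $A_{2n+1} = R_{q_0}(R_{\bar q_0}R_{q_0})^n\mr_{q_0}(\bar q_0)$ recalled before the statement, evaluating at $q_0$ and $\bar q_0$ and again using $\phi(q_0)=(1-|q_0|^2)^{-1}$, $\phi(\bar q_0)=(1-\bar q_0^2)^{-1}$. This yields $A_{2n} = \bar q_0^{2n-1}(1-|q_0|^2)^{-n}(1-\bar q_0^2)^{-n}$ at once, while the odd coefficients come out as $A_{2n+1}=\bar q_0^{2n}(1-|q_0|^2)^{-n}(1-\bar q_0^2)^{-(n+1)}$ for all $n\geq 0$ (the case $n=0$ reading $A_1=\phi(\bar q_0)=(1-\bar q_0^2)^{-1}$); the shift $n\mapsto n-1$ turns the latter into the claimed formula for $A_{2n-1}$, $n\geq 1$. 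Finally the directional derivative requires no further work: substituting $A_1 = (1-\bar q_0^2)^{-1}$ and $A_2 = \bar q_0(1-|q_0|^2)^{-1}(1-\bar q_0^2)^{-1}$ into the expansion theorem's identity $\lim_{t\to 0}t^{-1}\bigl(\mr_{q_0}(q_0+tv)-\mr_{q_0}(q_0)\bigr) = vA_1 + (q_0 v - v\bar q_0)A_2$ gives exactly the stated expression.

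The main obstacle is not the arithmetic, which is genuinely short, but the justification of the slice reduction in the noncommutative setting: one must verify carefully that $R_{q_0}$ and $R_{\bar q_0}$ preserve the subalgebra of regular functions with coefficients in $L_I$, and that on it left multiplication by an element of $L_I$ commutes with these operators, for only then do the one-slice computations determine the full regular functions and their values. A secondary, purely bookkeeping hazard is the exponent count in $(R_{\bar q_0}R_{q_0})^n\mr_{q_0}$ — the asymmetry between the $n$ factors $\bar q_0(1-\bar q_0^2)^{-1}$ and the $n-1$ factors $\bar q_0(1-|q_0|^2)^{-1}$ is precisely where an off-by-one slip in the powers of $(1-|q_0|^2)$ and $(1-\bar q_0^2)$ would creep in.
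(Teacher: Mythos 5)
Your proposal is correct, and its computational skeleton is the same as the paper's: both rest on the observation that the remainder operators reproduce $\phi(q)=(1-q\bar q_0)^{-*}$ up to a constant factor, iterate this, and evaluate at $q_0$ and $\bar q_0$. Where you differ is in how that key fact is established. The paper works directly in the noncommutative setting, computing $R_{\bar q_0}R_{q_0}\mr_{q_0}(q)=(q-\bar q_0)^{-*}*\left[(1-q\bar q_0)^{-*}-A_1\right]=(1-q\bar q_0)^{-*}\,\bar q_0 A_1$ by $*$-algebra manipulations (the factorization $(1-\bar q_0^2)-(1-q\bar q_0)=(q-\bar q_0)\bar q_0$, plus the tacit fact that series with coefficients in $L_I$ $*$-commute), and then proceeds by induction ``by means of similar computations.'' You instead reduce everything to the slice $L_I$: since all functions in play have coefficients in $L_I$, they are determined by their restrictions to $L_I$, where $*$ is the pointwise product and $R_{q_0}$, $R_{\bar q_0}$ are classical difference quotients; your eigen-relations $R_{q_0}\phi=\bar q_0(1-|q_0|^2)^{-1}\phi$ and $R_{\bar q_0}\phi=\bar q_0(1-\bar q_0^2)^{-1}\phi$ are then exactly the content of the paper's inductive step, proved commutatively. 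What the slice reduction buys is transparency: the induction collapses to counting scalar factors, and your bookkeeping ($n$ factors of $\bar q_0(1-\bar q_0^2)^{-1}$ and $n-1$ of $\bar q_0(1-|q_0|^2)^{-1}$ in $(R_{\bar q_0}R_{q_0})^n\mr_{q_0}$, one more $c_1$ after the extra $R_{q_0}$) checks out against the stated $A_{2n-1}$ and $A_{2n}$ after the index shift. What it costs is the structural lemma you rightly flag as the main obstacle --- that regular functions with coefficients in $L_I$ form a subalgebra preserved by $R_{q_0}$, $R_{\bar q_0}$ and by $L_I$-scalar multiplication, on which slice values determine the function; your sketch of this is sound, and one may add that evaluation of the relevant left regular quotients at points of $L_I$ is indeed pointwise because $T_f$ fixes such points when $f$ has $L_I$ coefficients. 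The final substitution of $A_1=(1-\bar q_0^2)^{-1}$ and $A_2=\bar q_0(1-|q_0|^2)^{-1}(1-\bar q_0^2)^{-1}$ into the expansion theorem's limit formula matches the paper, which leaves this step implicit.
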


\begin{proof}
We have already observed that $R_{q_0}\mr_{q_0}(q) = (1-q\bar q_0)^{-*}$, so that $A_1 = R_{q_0}\mr_{q_0}(\bar q_0) = (1-\bar q_0^2)^{-1}$. Moreover,
$$R_{\bar q_0}R_{q_0}\mr_{q_0}(q) = (q-\bar q_0)^{-*}*\left[R_{q_0}\mr_{q_0}(q)- A_{1}\right]=$$
$$=(q-\bar q_0)^{-*}*\left[(1-q\bar q_0)^{-*} - A_{1}\right] =$$
$$= (1-q\bar q_0)^{-*}* (q-\bar q_0)^{-*}*\left[(1-\bar q_0^2)- (1-q\bar q_0)\right]A_1=$$
$$= (1-q\bar q_0)^{-*} \bar q_0 A_1.$$
The thesis follows by induction, proving that for all $n \geq 1$
$$(R_{\bar q_0}R_{q_0})^{n}\mr_{q_0}(q) =(1-q\bar q_0)^{-*} \bar q_0 A_{2n-1},$$
$$R_{q_0}(R_{\bar q_0}R_{q_0})^{n}\mr_{q_0}(q) = (1-q\bar q_0)^{-*} \bar q_0 A_{2n}$$
by means of similar computations.
\end{proof}

We are now in a position to answer question (a).

\begin{remark}
For each $q_0 = x_0 + I y_0\in \B \setminus \rr$, the differential of $\mr_{q_0}$ at $q_0$ acts by right multiplication by $\partial_c\mr_{q_0}(q_0)=(1-|q_0|^2)^{-1}$ on $L_I$ and by right multiplication by $\partial_s\mr_{q_0}(q_0)=(1-\bar q_0^2)^{-1}$  on $L_I^\perp$. Since these quaternions have different moduli, $\mr_{q_0}$ is not conformal.
\end{remark}

We now turn our attention to question (b): whether or not regular M\"obius transformations preserve the quaternionic Poincar\'e metric on $\B$ described in section \ref{classicM} and in \cite{poincare}. We recall that this metric was constructed to be preserved by the classical (non regular) M\"obius transformations of $\B$. Thanks to theorem \ref{quotients}, we observe what follows.

\begin{remark}
If $\mr_{q_0}(q) = (1-q\bar q_0)^{-*}*(q-q_0)$ and $M_{q_0}(q) = (1-q\bar q_0)^{-1}(q-q_0)$ then
\begin{equation}
\mr_{q_0}(q)= M_{q_0}(T(q)).
\end{equation}
where $T(q) = (1-q q_0)^{-1} q (1-q q_0)$ is a diffeomorphism of $\B$ with inverse $T^{-1}(q) = (1-q \bar q_0)^{-1} q (1- q \bar q_0)$.
\end{remark}

Thus, a generic regular M\"obius transformation of $\B$
$$q \mapsto \mr_{q_0}(q) u = M_{q_0}(T(q)) u$$ 
(with $u \in \partial \B$) is an isometry if and only if $T$ is. An example shows that this is not the case.

\begin{example}
Let $q_0=\frac{I_0}{2}$ for some $I_0 \in \s$. Then $T(q) = (1-q q_0)^{-1} q (1-q q_0)$ is not an isometry for the Poincar\'e metric defined by formula \eqref{poincaredist}. Indeed, if $J_0 \in \s, J_0 \perp I_0$ then setting $q_1=\frac{J_0}{2}$ we have
$$\delta_\B(q_0,q_1) > \delta_\B(T(q_0),T(q_1))$$
since
$$\frac{|q_1-q_0|^2}{|1-q_1\bar q_0|^2} = \frac{|2J_0-2I_0|^2}{|4+J_0I_0|^2} =\frac{8}{17}$$
while, computing $T(q_0) = q_0 = \frac{I_0}{2}$ and $T(q_1) = \frac{8I_0+15J_0}{34}$, we conclude that
$$\frac{|T(q_1)-q_0|^2}{|1-T(q_1)\bar q_0|^2} = 4\frac{|-3I_0+5J_0|^2}{|20-5K_0|^2}= \frac{8}{25}.$$
Thus, the regular M\"obius transformations do not have a definite behavior with respect to $\delta_\B$: we have proven that $T$ (hence $\mr_{q_0}$) is not an isometry, nor a dilation; the same computation proves that  $T^{-1}(q) = (1-q \bar q_0)^{-1} q (1- q \bar q_0)$ (hence $\mr_{\bar q_0}$) is not a contraction.
\end{example}

The previous discussion proves that the study of regular M\"obius transformations cannot be framed into the classical study of $\B$, and that it requires further research. On the other hand, the theory of regular functions provides working tools that were not available for the classical M\"obius transformations. These tools lead us in \cite{schwarzpick} to an analog of the Schwarz-Pick lemma, which we will present in the next section.


\section{The Schwarz-Pick lemma for regular functions}\label{schwarzpick}

In the complex case, holomorphic functions play a crucial role in the study of the intrinsic geometry of the unit disc $\Delta = \{z \in \cc\ |\  |z|<1\}$ thanks to the Schwarz-Pick lemma \cite{pick2,pick1}.

\begin{theorem}
Let $f:\Delta \to \Delta$ be a holomorphic function and let $z_0 \in \Delta$. Then
\begin{equation}
\left|\frac{f(z)-f(z_0)}{1-\overline{f(z_0)}f(z)}\right| \leq \left|\frac{z-z_0}{1-\bar z_0 z}\right|,
\end{equation}
for all $z \in \Delta$ and
\begin{equation}
\frac{|f'(z_0)|}{1-|f(z_0)|^2} \leq \frac{1}{1-|z_0|^2}.
\end{equation}
All inequalities are strict for $z \in \Delta \setminus \{z_0\}$, unless $f$ is a M\"obius transformation.
\end{theorem}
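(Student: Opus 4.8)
The plan is to reduce the statement to the classical Schwarz lemma by pre- and post-composing $f$ with automorphisms of the disc. For $a \in \Delta$ write $\varphi_a(z) = \frac{z-a}{1-\bar a z}$ for the Möbius transformation of $\Delta$ interchanging $a$ and $0$; each $\varphi_a$ is a biholomorphism of $\Delta$ onto itself, with inverse $\varphi_{-a}$ and with $\varphi_a'(a) = (1-|a|^2)^{-1}$, as one checks from $\varphi_a'(z) = \frac{1-|a|^2}{(1-\bar a z)^2}$. Setting $w_0 = f(z_0)$, I would consider the composition $g = \varphi_{w_0} \circ f \circ \varphi_{z_0}^{-1}$, which is holomorphic from $\Delta$ into $\Delta$ and fixes the origin, since $\varphi_{z_0}^{-1}$ sends $0$ to $z_0$, then $f$ sends $z_0$ to $w_0$, and finally $\varphi_{w_0}$ sends $w_0$ to $0$.

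First I would establish the Schwarz lemma for $g$. Because $g(0)=0$, the quotient $h(z) = g(z)/z$ extends holomorphically across the origin, and on each circle $|z| = r < 1$ one has $|h(z)| = |g(z)|/r \le 1/r$. By the maximum modulus principle $|h| \le 1/r$ on the whole disc $|z| \le r$, and letting $r \to 1$ yields $|g(z)| \le |z|$ on $\Delta$ together with $|g'(0)| = |h(0)| \le 1$. The rigidity is already built into this argument: if $|h|$ attains the value $1$ at any interior point, the maximum modulus principle forces $h$ to be a unimodular constant, so that $g$ is a rotation $z \mapsto e^{i\theta}z$.

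Next I would translate the two bounds back to $f$. Writing $\zeta = \varphi_{z_0}(z)$, the inequality $|g(\zeta)| \le |\zeta|$ reads exactly $\left|\frac{f(z)-f(z_0)}{1-\overline{f(z_0)}f(z)}\right| \le \left|\frac{z-z_0}{1-\bar z_0 z}\right|$, which is the first displayed estimate. For the derivative estimate, the chain rule gives $g'(0) = \varphi_{w_0}'(w_0)\, f'(z_0)\, (\varphi_{z_0}^{-1})'(0)$; inserting $\varphi_{w_0}'(w_0) = (1-|w_0|^2)^{-1}$ and $(\varphi_{z_0}^{-1})'(0) = 1/\varphi_{z_0}'(z_0) = 1-|z_0|^2$, the inequality $|g'(0)| \le 1$ rearranges to $\frac{|f'(z_0)|}{1-|f(z_0)|^2} \le \frac{1}{1-|z_0|^2}$.

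Finally, the equality clause follows directly from the rigidity recorded above: if either inequality becomes an equality for some $z \ne z_0$, or in the derivative bound, then $g$ is a rotation, whence $f = \varphi_{w_0}^{-1} \circ g \circ \varphi_{z_0}$ is a composition of disc automorphisms, that is, a Möbius transformation of $\Delta$. I do not expect a serious obstacle: the whole argument is standard once the reduction to $g$ is set up, and the only points demanding care are the bookkeeping of the automorphism derivatives and the correct invocation of the maximum modulus principle in its sharp form, which supplies both the inequalities and the equality cases simultaneously.
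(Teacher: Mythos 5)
Your proof is correct and complete: the reduction $g=\varphi_{f(z_0)}\circ f\circ\varphi_{z_0}^{-1}$, the maximum-modulus derivation of the Schwarz lemma for $g$ (including the sharp form giving rigidity), the chain-rule bookkeeping with $\varphi_a'(a)=(1-|a|^2)^{-1}$, and the translation of the equality case back to ``$f$ is a M\"obius transformation'' are all accurate. Note, though, that the paper does not prove this statement at all---it quotes the classical complex Schwarz--Pick lemma with citations to Pick's original papers, purely as motivation for the quaternionic analog proved elsewhere---so your argument is the standard textbook proof supplied where the paper deliberately gives none.
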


This is exactly the type of tool that is not available, in the quaternionic case, for the classical M\"obius transformations. To the contrary, an analog of the Schwarz-Pick lemma is proven in \cite{schwarzpick} for quaternionic regular functions.
To present it, we begin with a result concerning the special case of a function $f \in \reg$ having a zero.

\begin{theorem} \label{teo1}
If $f : \B \to \B$ is regular and if $f(q_0)=0$ for some $q_0 \in \B$, then
\begin{equation}
|\mr_{q_0}^{-*}*f(q)| \leq 1
\end{equation}
for all $q \in \B$. The inequality is strict, unless $\mr_{q_0}^{-*}*f(q) \equiv u$ for some $u \in \partial \B$.
\end{theorem}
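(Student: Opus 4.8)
The plan is to set $g=\mr_{q_0}^{-*}*f$ and prove the estimate by a maximum modulus argument, after first checking that $g$ is genuinely regular on \emph{all} of $\B$ and not merely on $\B\setminus\z_{\mr_{q_0}^s}$. Throughout I will use two standard tools of the theory: the factorization of a regular function at a zero (see \cite{zeros}) and the maximum modulus principle for regular functions on a Euclidean ball.

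First I would establish the regularity of $g$ on the whole ball. Since $f(q_0)=0$, the factorization theorem yields a regular $h:\B\to\hh$ with $f=(q-q_0)*h$. By Proposition \ref{hermitian} the transformation $\mr_{q_0}$ admits the two-sided expression $\mr_{q_0}=(q-q_0)*(1-q\bar q_0)^{-*}$, so that in the division algebra $\mathcal{L}_\B$ one has $\mr_{q_0}^{-*}=(1-q\bar q_0)*(q-q_0)^{-*}$. Hence
\[
g=\mr_{q_0}^{-*}*f=(1-q\bar q_0)*(q-q_0)^{-*}*(q-q_0)*h=(1-q\bar q_0)*h,
\]
which is a $*$-product of regular functions and therefore regular on all of $\B$. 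This is the step I expect to be the main obstacle: a priori $g$ is only defined off the sphere $\z_{\mr_{q_0}^s}=x_0+y_0\s$, and the entire argument hinges on the apparent singularity there being removable. The factorization above is exactly what makes the zero of $f$ at $q_0$ cancel the pole of $\mr_{q_0}^{-*}$.

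With $g$ regular on $\B$, I would bound $|g|$ by relating it to $f$ through Theorem \ref{quotients}. Writing $T=T_{\mr_{q_0}}$, that theorem gives $g(q)=\mr_{q_0}(T(q))^{-1}f(T(q))$ on $\B\setminus\z_{\mr_{q_0}^s}$, so $|g(q)|=|f(T(q))|/|\mr_{q_0}(T(q))|$. Since $T$ maps each sphere $x+y\s$ to itself it preserves moduli, and for every $r\neq|q_0|$ it is a bijection of $\{|q|=r\}$ onto itself. Therefore, for such $r$,
\[
\max_{|q|=r}|g(q)|\leq\frac{\max_{|p|=r}|f(p)|}{\min_{|p|=r}|\mr_{q_0}(p)|}\leq\frac{1}{m(r)},\qquad m(r):=\min_{|p|=r}|\mr_{q_0}(p)|,
\]
using $|f|<1$ on $\B$. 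Because $1-q\bar q_0$ never vanishes on $\overline\B$ when $|q_0|<1$, the map $\mr_{q_0}$ extends continuously to $\overline\B$ with $|\mr_{q_0}|\equiv1$ on $\partial\B$, whence $m(r)\to1$ as $r\to1^-$. Fixing $q\in\B$ and applying the maximum modulus principle to $g$ on $\overline{B(0,r)}$ for $|q|<r<1$, $r\neq|q_0|$, gives $|g(q)|\leq\max_{|q'|=r}|g(q')|\leq1/m(r)$; letting $r\to1^-$ yields $|g(q)|\leq1$.

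Finally, for the rigidity statement I would invoke the strong form of the maximum modulus principle. If $|g(q)|=1$ at some interior point $q\in\B$, then, since $|g|\leq1$ throughout $\B$, the function $|g|$ attains an interior maximum, so $g$ must reduce to a constant $u$, necessarily with $|u|=1$; that is, $\mr_{q_0}^{-*}*f\equiv u\in\partial\B$. Conversely, such a unit-modulus constant realizes equality everywhere. Hence $|\mr_{q_0}^{-*}*f|<1$ on $\B$ unless $\mr_{q_0}^{-*}*f\equiv u$ for some $u\in\partial\B$, as claimed.
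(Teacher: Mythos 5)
Your proof is correct. The survey itself states Theorem \ref{teo1} without proof, deferring to \cite{schwarzpick}, and your argument --- removing the apparent singularity of $\mr_{q_0}^{-*}*f$ on $x_0+y_0\s$ via the factorization $f=(q-q_0)*h$ and the two-sided form of $\mr_{q_0}$, passing to the pointwise quotient $\mr_{q_0}(T(q))^{-1}f(T(q))$ through Theorem \ref{quotients} with the modulus-preserving map $T$, and concluding from $|\mr_{q_0}|\to 1$ at $\partial\B$ together with the maximum modulus principle, whose strong form also gives the rigidity clause --- is essentially the same route taken there (your only slip is notational: the ring of quotients on $\B$ is $\mathcal{L}_1$ in the paper's notation, not $\mathcal{L}_\B$).
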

 
A useful property of the moduli of regular products is also proven in \cite{schwarzpick}:

\begin{lemma}\label{modulusproduct}
Let $f,g,h: B(0,R) \to \hh$ be regular functions. If $|f|\leq |g|$ then $|h*f| \leq |h*g|$. Moreover, if $|f|< |g|$ then $|h*f| < |h*g|$ in $B \setminus \z_h$.
\end{lemma}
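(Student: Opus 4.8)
The plan is to reduce the statement about regular products to a pointwise statement about $f$ and $g$ by means of the multiplicative formula of theorem \ref{quotients}. First I would fix $q \in B \setminus \z_h$ and set $S_h(q) = h(q)^{-1} q h(q)$. Applying theorem \ref{quotients} with the first factor $h$ and the second factor $f$ (respectively $g$) gives $h*f(q) = h(q)\, f(S_h(q))$ and $h*g(q) = h(q)\, g(S_h(q))$. Taking moduli yields the two identities $|h*f(q)| = |h(q)|\,|f(S_h(q))|$ and $|h*g(q)| = |h(q)|\,|g(S_h(q))|$, which already isolate the single common factor $|h(q)|$.

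The key observation is that $S_h(q)$ is a quaternionic conjugate of $q$, so $|S_h(q)| = |q| < R$ and $S_h(q)$ lies in $B(0,R)$ (in fact in the same sphere $x+y\s$ as $q$, since conjugation fixes the real part and rotates the imaginary part). Hence the hypothesis may be evaluated at the point $S_h(q)$: if $|f|\le|g|$ on $B$ then $|f(S_h(q))| \le |g(S_h(q))|$, and multiplying by $|h(q)|\ge 0$ gives $|h*f(q)| \le |h*g(q)|$ for every $q \in B \setminus \z_h$. For the strict version, if $|f|<|g|$ on $B$ then $|f(S_h(q))| < |g(S_h(q))|$ for each $q \in B \setminus \z_h$, where moreover $|h(q)|>0$, so the product inequality is strict precisely on $B \setminus \z_h$, as claimed.

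It remains to extend the non-strict inequality from $B \setminus \z_h$ to all of $B$. If $h\equiv 0$ the claim is trivial, since then $h*f \equiv 0 \equiv h*g$. Otherwise the zero set $\z_h$ has empty interior (being a union of isolated points and isolated spheres), so $B\setminus\z_h$ is dense in $B$; since $q \mapsto |h*f(q)|$ and $q \mapsto |h*g(q)|$ are continuous, the inequality $|h*f|\le|h*g|$ extends by continuity to $\z_h$ as well.

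The only delicate point is the behaviour on $\z_h$, where the factorisation $h*f(q) = h(q)\, f(S_h(q))$ breaks down because $h(q)^{-1}$ is undefined. The density-plus-continuity argument circumvents this cleanly; it is also consistent with the known fact that $h(q)=0$ forces $h*f(q)=h*g(q)=0$, so that the strict inequality genuinely cannot be expected on $\z_h$. Everything else is a direct consequence of the multiplicative structure recorded in theorem \ref{quotients}.
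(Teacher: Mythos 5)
Your proof is correct and takes essentially the approach the paper makes available (and that the cited source \cite{schwarzpick} uses): the product formula of theorem \ref{quotients} gives $|h*f(q)| = |h(q)|\,|f(h(q)^{-1}qh(q))|$ with the conjugation point lying in the same sphere $x+y\s$ as $q$, hence in $B(0,R)$, and the comparison with $g$ follows pointwise. The only cosmetic difference is at the zeros of $h$, where the standard argument notes directly that $h(q)=0$ forces $h*f(q)=0=h*g(q)$, while you recover the non-strict inequality there by density of $B\setminus\z_h$ and continuity --- both are valid.
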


The property above allows us to derive from theorem \ref{teo1} the perfect analog of the Schwarz-Pick lemma in the special case $f(q_0)=0$. We recall that $\partial_c f$ denotes the Cullen derivative of $f$, defined by formula \eqref{cullen}, while $\partial_s f$ denotes the spherical derivative, defined by formula \eqref{spherical}.

\begin{corollary}\label{schwarzp}
If $f : \B \to \B$ is regular and if $f(q_0)=0$ for some $q_0 \in \B$ then
\begin{equation}
|f(q)| \leq |\mr_{q_0}(q)|
\end{equation}
for all $q \in \B$. The inequality is strict at all $q \in \B \setminus \{q_0\}$, unless there exists $u \in \partial \B$ such that $f(q) = \mr_{q_0}(q) \cdot u$ at all $q \in \B$.
Moreover, $|R_{q_0}f(q)| \leq |(1-q\bar q_0)^{-*}|$ in $\B$ and in particular
\begin{eqnarray}
|\partial_c f (q_0)| \le \frac{1}{1-|q_0|^2}\\
|\partial_s f(q_0)|
\leq \frac{1}{|1-\overline{q_0}^2|}.
\end{eqnarray}
These inequalities are strict, unless $f(q) = \mr_{q_0}(q) \cdot u$ for some $u \in \partial \B$.
\end{corollary}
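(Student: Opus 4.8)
The plan is to deduce the whole statement from the master estimate of Theorem~\ref{teo1}, namely $|\mr_{q_0}^{-*}*f|\le 1$, by carrying it through the $*$-structure with the help of Lemma~\ref{modulusproduct}. Write $g=\mr_{q_0}^{-*}*f$. First I would record that $g$ is genuinely regular on $\B$: using the factorization $\mr_{q_0}=(q-q_0)*(1-q\bar q_0)^{-*}$ one gets $\mr_{q_0}^{-*}=(1-q\bar q_0)*(q-q_0)^{-*}$, and since $f(q_0)=0$ the quotient $(q-q_0)^{-*}*f$ is exactly the (regular) remainder $R_{q_0}f$, so
\[
g=(1-q\bar q_0)*R_{q_0}f
\]
is an honest regular product. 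Theorem~\ref{teo1} then says $|g|\le 1$, strictly unless $g\equiv u\in\partial\B$. For the first inequality I would use $f=\mr_{q_0}*g$ and compare $g$ with the constant $1$: since $|g|\le|1|$, Lemma~\ref{modulusproduct} with multiplier $h=\mr_{q_0}$ yields $|f|=|\mr_{q_0}*g|\le|\mr_{q_0}*1|=|\mr_{q_0}|$. Its strict clause gives $|f|<|\mr_{q_0}|$ on $\B\setminus\z_{\mr_{q_0}}=\B\setminus\{q_0\}$ as soon as $|g|<1$; and the alternative $g\equiv u$ means precisely $f=\mr_{q_0}*u=\mr_{q_0}\cdot u$, the exceptional case.

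For the remainder estimate, the identity from the first step already does the work: from $g=(1-q\bar q_0)*R_{q_0}f$, multiplying on the left by $(1-q\bar q_0)^{-*}$ gives
\[
R_{q_0}f=(1-q\bar q_0)^{-*}*g.
\]
The multiplier $(1-q\bar q_0)^{-*}$ is regular and zero-free on $\B$ (its zeros would be those of $1-qq_0$, which has none in $\B$; equivalently $(1-q\bar q_0)^s$ does not vanish in $\B$), so a second application of Lemma~\ref{modulusproduct}, again comparing $g$ with $1$, produces $|R_{q_0}f|\le|(1-q\bar q_0)^{-*}|$ throughout $\B$, strictly everywhere when $g$ is non-constant.

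It then remains to specialize this pointwise bound. I would invoke the remark expressing $\partial_c f(q_0)=R_{q_0}f(q_0)$ and $\partial_s f(q_0)=R_{q_0}f(\bar q_0)$, and evaluate the modulus of the regular reciprocal at $q_0$ and $\bar q_0$. Using $\phi^{-*}(q)=\phi(T_\phi(q))^{-1}$ from Theorem~\ref{quotients} with $\phi(q)=1-q\bar q_0$ and $T_\phi(q)=(1-qq_0)^{-1}q(1-qq_0)$, I would check that $T_\phi$ fixes both $q_0$ and $\bar q_0$ (because $1-q_0^2$ commutes with $q_0$, while $1-|q_0|^2$ is real), whence $(1-q\bar q_0)^{-*}(q_0)=(1-|q_0|^2)^{-1}$ and $(1-q\bar q_0)^{-*}(\bar q_0)=(1-\bar q_0^2)^{-1}$. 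Taking moduli and restricting the inequality $|R_{q_0}f|\le|(1-q\bar q_0)^{-*}|$ to these two points gives the bounds $\tfrac{1}{1-|q_0|^2}$ and $\tfrac{1}{|1-\bar q_0^2|}$, with strictness inherited from the non-constancy of $g$.

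The delicate point throughout is not the estimates but keeping every object a bona fide regular function on $\B$ rather than a mere element of the quotient algebra $\mathcal{L}_1$: Lemma~\ref{modulusproduct} is stated for regular $f,g,h$, so each application must have a regular multiplier ($\mr_{q_0}$, then $(1-q\bar q_0)^{-*}$, both regular on $\B$) and a regular argument ($g$). The rewriting $g=(1-q\bar q_0)*R_{q_0}f$ is what makes the regularity of $g$ transparent and simultaneously delivers the clean relation $R_{q_0}f=(1-q\bar q_0)^{-*}*g$; I expect the main care to be exercised precisely in justifying these $*$-cancellations inside $\mathcal{L}_1$ before reading the resulting identities as equalities of regular functions on $\B$.
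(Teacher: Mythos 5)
Your proposal is correct and follows exactly the route the paper indicates: derive the corollary from Theorem~\ref{teo1} by applying Lemma~\ref{modulusproduct} twice (with multipliers $\mr_{q_0}$ and $(1-q\bar q_0)^{-*}$, both regular and with the right zero sets), then specialize via $\partial_c f(q_0)=R_{q_0}f(q_0)$, $\partial_s f(q_0)=R_{q_0}f(\bar q_0)$ and the evaluation $(1-q\bar q_0)^{-*}(q)=\bigl(1-T_\phi(q)\bar q_0\bigr)^{-1}$ from Theorem~\ref{quotients}. Your intermediate factorization $g=(1-q\bar q_0)*R_{q_0}f$, which makes the regularity of $g=\mr_{q_0}^{-*}*f$ transparent and yields $R_{q_0}f=(1-q\bar q_0)^{-*}*g$ in one stroke, is a clean and correct way to fill in the details the paper leaves implicit.
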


Finally, the desired result is obtained in full generality.

\begin{theorem}[Schwarz-Pick lemma] \label{MainSchwarzPick}
Let $f : \B \to \B$ be a regular function and let $q_0 \in \B$. Then
\begin{equation}\label{MainSchwarzPickEq}
|(f(q)-f(q_0))*(1-\overline{f(q_0)}*f(q))^{-*}| \leq |(q-q_0)*(1-\bar q_0*q)^{-*}|,
\end{equation}
\begin{equation}
|R_{q_0}f(q)*(1-\overline{f(q_0)}*f(q))^{-*}| \leq |(1-\bar q_0*q)^{-*}|
\end{equation}
in $\B$. In particular,
\begin{equation} \label{culderiv}
|\partial_c f *(1-\overline{f(q_0)}*f(q))^{-*}|_{|_{q_0}} \leq \frac{1}{1-|q_0|^2}.
\end{equation}
Apart from \eqref{MainSchwarzPickEq} at $q_0$ (which reduces to $0\leq0$), all inequalities are strict unless $f$ is a regular M\"obius transformation.
\end{theorem}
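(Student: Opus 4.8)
The plan is to reduce the general statement to Corollary \ref{schwarzp}, which already settles the case of a function vanishing at $q_0$, by ``post--composing'' $f$ with the regular M\"obius transformation that should send the value $a:=f(q_0)$ to $0$. Since genuine composition need not preserve regularity, this post--composition must be realized through the left action of $Sp(1,1)$ on $\reg$. Concretely I would introduce
$$g=(1-f\bar a)^{-*}*(f-a)=(f-a)*(1-\bar a*f)^{-*},$$
the two expressions coinciding by the first proposition of section \ref{regularmoebius}. That same proposition guarantees $g\in\reg$ (it is the function $\tilde f$ of its proof with $u=v=1$), and $g$ is regular on all of $\B$ because $|f\bar a|<1$ forces $1-f\bar a$ to be zero--free there. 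This choice is made so that both sides of \eqref{MainSchwarzPickEq} become moduli of explicit regular functions: the left--hand side is exactly $|g|$, while applying the equality of that proposition to the identity function gives $\mr_{q_0}(q)=(1-q\bar q_0)^{-*}*(q-q_0)=(q-q_0)*(1-\bar q_0*q)^{-*}$, so the right--hand side is exactly $|\mr_{q_0}|$.

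The crux is then to prove $g(q_0)=0$, after which Corollary \ref{schwarzp} applied to $g$ yields $|g(q)|\le|\mr_{q_0}(q)|$ together with $|R_{q_0}g(q)|\le|(1-q\bar q_0)^{-*}|$ and the derivative bounds, all strict off $q_0$ unless $g=\mr_{q_0}\cdot u$. To see $g(q_0)=0$ I would factor $f-a=(q-q_0)*h$ with $h=R_{q_0}f$ regular (legitimate since $(f-a)(q_0)=0$), so that $g=(q-q_0)*h*(1-\bar a*f)^{-*}$ has a left factor $q-q_0$ vanishing at $q_0$; and a regular product whose left factor vanishes at $q_0$ vanishes at $q_0$ (from $p*m(q)=p(q)\,m(p(q)^{-1}qp(q))$ one lets $q\to q_0$, the conjugated argument staying bounded in $x_0+y_0\s$). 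The same factorization identifies $R_{q_0}g=h*(1-\bar a*f)^{-*}=R_{q_0}f*(1-\overline{f(q_0)}*f)^{-*}$, so the second displayed inequality of the theorem is exactly $|R_{q_0}g|\le|(1-\bar q_0*q)^{-*}|$ (recall $\bar q_0*q=q\bar q_0$); specializing at $q_0$ and using the Remark identifying $R_{q_0}g(q_0)=\partial_c g(q_0)$ then gives \eqref{culderiv}.

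For the rigidity clause I would invoke the equality case of Corollary \ref{schwarzp}: off $q_0$ the inequalities are strict unless $g=\mr_{q_0}\cdot u$ for some $u\in\partial\B$. Undoing the left action (which, being the action of an element of $Sp(1,1)$, carries $\mathfrak{M}$ into $\mathfrak{M}$) recovers $f$ from $\mr_{q_0}\cdot u$ and shows that equality forces $f\in\mathfrak{M}$, i.e. $f$ is a regular M\"obius transformation. The exceptional behaviour of \eqref{MainSchwarzPickEq} at $q_0$ is automatic, since both $|g(q_0)|$ and $|\mr_{q_0}(q_0)|$ vanish.

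I expect the main obstacle to be precisely the verification $g(q_0)=0$. Because the value of a regular product at a point is governed by the transformation $T$ of Theorem \ref{quotients} rather than by naive pointwise multiplication, one cannot simply read the zero off the factor $f-a$; the safe route is the factorization argument above. Alternatively one may check via Theorem \ref{quotients} that $T_{(1-f\bar a)^c}(q_0)=q_0$, which holds exactly because $(1-f\bar a)(q_0)=1-|a|^2$ is real and hence commutes with $q_0$. Keeping careful track of which intermediate point of the sphere $x_0+y_0\s$ each regular quotient is evaluated at is the delicate bookkeeping underlying the whole argument.
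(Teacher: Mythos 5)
Your proposal is correct and takes essentially the approach the paper intends: the paper itself only announces Theorem \ref{MainSchwarzPick} (deferring the proof to the cited work \cite{schwarzpick}), and the reduction you give --- pass to $g=(f-f(q_0))*(1-\overline{f(q_0)}*f)^{-*}$ via the $Sp(1,1)$ action on $\reg$, verify $g(q_0)=0$ by factoring $f-f(q_0)=(q-q_0)*R_{q_0}f$ so that $R_{q_0}g=R_{q_0}f*(1-\overline{f(q_0)}*f)^{-*}$, then apply Corollary \ref{schwarzp} and transfer the rigidity statement back through the action --- is exactly the derivation that section \ref{schwarzpick} is set up to support. One caveat: your parenthetical alternative verification of $g(q_0)=0$, via $T_{(1-f\bar a)^c}(q_0)=q_0$ ``because $(1-f\bar a)(q_0)=1-|a|^2$ is real,'' is not justified as stated, since by Theorem \ref{quotients} the relevant transformation conjugates by $(1-f\bar a)^c(q_0)$, the value of the \emph{regular conjugate}, which need not be real nor commute with $q_0$ merely because $(1-f\bar a)(q_0)$ is real; fortunately this aside is not load-bearing, as your main factorization argument establishes $g(q_0)=0$ correctly.
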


This promising result makes it reasonable to expect that regular functions play an important role in the intrinsic geometry of the quaternionic unit ball. Therefore, it encourages to continue the study of regular M\"obius transformations and of their differential or metric properties.


\bibliography{BisiStoppatoArXiv}

\begin{thebibliography}{10}

\bibitem{ahlforsmoebius}
L.~V. Ahlfors.
\newblock M{\"o}bius transformations in {${\bf R}^n$} expressed through
  {$2\times 2$} matrices of {C}lifford numbers.
\newblock {\em Complex Variables Theory Appl.}, 5(2-4):215--224, 1986.

\bibitem{ahlfors1988}
L.~V. Ahlfors.
\newblock Cross-ratios and {S}chwarzian derivatives in {${\bf R}\sp n$}.
\newblock In {\em Complex analysis}, pages 1--15. Birkh\"auser, Basel, 1988.

\bibitem{poincare}
C.~Bisi and G.~Gentili.
\newblock {M}\"obius transformations and the {P}oincar\'e distance in the
  quaternionic setting.
\newblock {\em Indiana Univ. Math. J.}, 58(6):2729--2764, 2009.

\bibitem{schwarzpick}
C.~Bisi and C.~Stoppato.
\newblock The {S}chwarz-{P}ick lemma for slice regular functions.
\newblock {\em Indiana Univ. Math. J.}, to appear.

\bibitem{advancesrevised}
F.~Colombo, G.~Gentili, I.~Sabadini, and D.~Struppa.
\newblock Extension results for slice regular functions of a quaternionic
  variable.
\newblock {\em Adv. Math.}, 222(5):1793--1808, 2009.

\bibitem{zeros}
G.~Gentili and C.~Stoppato.
\newblock Zeros of regular functions and polynomials of a quaternionic
  variable.
\newblock {\em Michigan Math. J.}, 56(3):655--667, 2008.

\bibitem{cras}
G.~Gentili and D.~C. Struppa.
\newblock A new approach to {C}ullen-regular functions of a quaternionic
  variable.
\newblock {\em C. R. Math. Acad. Sci. Paris}, 342(10):741--744, 2006.

\bibitem{advances}
G.~Gentili and D.~C. Struppa.
\newblock A new theory of regular functions of a quaternionic variable.
\newblock {\em Adv. Math.}, 216(1):279--301, 2007.

\bibitem{perotti}
R.~Ghiloni and A.~Perotti.
\newblock Slice regular functions on real alternative algebras.
\newblock {\em Adv. Math.}, 226(2):1662--1691, 2011.

\bibitem{maass}
H.~Maass.
\newblock Automorphe {F}unktionen von meheren {V}er\"anderlichen und
  {D}irichletsche {R}eihen.
\newblock {\em Abh. Math. Sem. Univ. Hamburg}, 16:72--100, 1949.

\bibitem{perotti2009}
A.~Perotti.
\newblock Regular quaternionic functions and conformal mappings.
\newblock {\em Cubo}, 11(1):123--143, 2009.

\bibitem{pick2}
G.~Pick.
\newblock \"{U}ber die {B}eschr\"ankungen analytischer {F}unktionen, welche
  durch vorgegebene {F}unktionswerte bewirkt werden.
\newblock {\em Math. Ann.}, 77(1):7--23, 1915.

\bibitem{pick1}
G.~Pick.
\newblock \"{U}ber eine {E}igenschaft der konformen {A}bbildung kreisf\"ormiger
  {B}ereiche.
\newblock {\em Math. Ann.}, 77(1):1--6, 1915.

\bibitem{rowen}
L.~H. Rowen.
\newblock {\em Ring theory}.
\newblock Academic Press Inc., Boston, MA, student edition, 1991.

\bibitem{poli}
C.~Stoppato.
\newblock Poles of regular quaternionic functions.
\newblock {\em Complex Var. Elliptic Equ.}, 54(11):1001--1018, 2009.

\bibitem{moebius}
C.~Stoppato.
\newblock Regular {M}oebius transformations of the space of quaternions.
\newblock {\em Ann. Global Anal. Geom.}, 39(4):387--401, 2011.

\bibitem{expansion}
C.~Stoppato.
\newblock A new series expansion for slice regular functions.
\newblock {\em Adv. Math.}, 231(3-4):1401--1416, 2012.

\bibitem{sudbery}
A.~Sudbery.
\newblock Quaternionic analysis.
\newblock {\em Math. Proc. Cambridge Philos. Soc.}, 85(2):199--224, 1979.

\bibitem{vahlen}
K.~T. Vahlen.
\newblock Ueber {B}ewegungen und complexe {Z}ahlen.
\newblock {\em Math. Ann.}, 55(4):585--593, 1902.

\bibitem{wilker}
J.~B. Wilker.
\newblock The quaternion formalism for {M}\"obius groups in four or fewer
  dimensions.
\newblock {\em Linear Algebra Appl.}, 190:99--136, 1993.

\end{thebibliography}

\bibliographystyle{abbrv}


\end{document}